\documentclass[12pt]{amsart}

\usepackage[margin=30truemm]{geometry}
\usepackage{amssymb} 
\usepackage{braket}
\usepackage{bm}
\usepackage{comment}
\usepackage{mathtools}

\numberwithin{equation}{section}
\mathtoolsset{showonlyrefs=true}

\theoremstyle{plain} 
\newtheorem{theorem}{Theorem}[section]
\newtheorem{lemma}[theorem]{Lemma}
\newtheorem{corollary}[theorem]{Corollary}
\newtheorem{proposition}[theorem]{Proposition}

\newtheorem{remark}[theorem]{Remark}

\theoremstyle{definition}
\newtheorem{definition}[theorem]{Definition}

\newtheorem*{ackn}{\bf{Acknowledgment}}
\newtheorem*{orga}{\bf{Organization}}

\newcommand{\Ker}[0]{\operatorname{Ker}}

\newcommand{\deldelb}{\sqrt{-1}\partial_{B} \overline{\partial_{B}}}
\newcommand{\deldel}{\sqrt{-1}\partial \overline{\partial}}

\newcommand{\dbar}{\overline{\partial}}
\newcommand{\e}{\varepsilon}

\newcommand{\tr}{\mathrm{tr}}
\newcommand{\Ric}{\mathrm{Ric}}
\newcommand{\MA}{\mathrm{MA}}

\renewcommand{\phi}{\varphi}

\renewcommand{\leq}{\leqslant}
\renewcommand{\geq}{\geqslant}

\newcommand{\ip}[1]{\left\langle#1\right\rangle}

\begin{document}

\title[Soliton-type metrics associated with weighted CSCK metrics]
{Soliton-type metrics associated with weighted CSCK metrics
on Fano manifolds}
\author[S. Nakamura]
{Satoshi Nakamura} 

\subjclass[2010]{ 
Primary 53C25; Secondary 53C55, 58E11.
}
\keywords{ 
$(v,w)$-CSCK metrics, $g$-solitons, Fano manifolds, Sasaki manifolds
}
\thanks{ 
}
\address{
}
\email{}

\address{
S. Nakamura:
Department of Mathematics, Institute of Science Tokyo,
2-12-1, Ookayama, Meguro-ku, Tokyo, 152-8551, Japan
}
\email{s.nakamura@math.titech.ac.jp}

\date{\today}


\maketitle

\begin{abstract}
We study weighted constant scalar curvature K\"ahler metrics,
introduced by Lahdili as $(v,w)$-CSCK metrics,
on Fano manifolds and their relationship with soliton-type metrics.
In this paper, we introduce a weight function $g(v,w)$
associated with a pair of weight functions $(v,w)$.
Assuming that $v$ and $g(v,w)$ are positive and log-concave
on the moment polytope,
we prove that the existence of a $(v,w)$-CSCK
metric in the first Chern class is equivalent to the existence of a
$g(v,w)$-soliton.

We also explain that a $g(v,w)$-soliton arises naturally from
Sasaki geometry.
More precisely, let $(v,w)$ be the weight functions defining a weighted CSCK metric
in $2\pi c_1(X)$ which gives rise to a $\hat{\xi}$-transverse extremal
metric on an $S^1$-bundle $N$ in the canonical bundle of a Fano
manifold $X$, where $\hat{\xi}$ is a possibly irregular Reeb field on $N$.
We prove that the associated $g(v,w)$-soliton on $X$ gives rise to a
$\hat{\xi}$-transverse Mabuchi soliton on $N$.
\end{abstract}

\tableofcontents

\section{Introduction}
In his paper \cite{Lah19}, Lahdili
introduced the notion of a weighted
constant scalar curvature K\"ahler metric (a weighted  CSCK metric for short)
formulated as follows.
Let $X$ be an $n$-dimensional compact K\"ahler manifold with a
K\"ahler class $\Omega$, and 
let $T$ be a maximal compact torus in
the reduced automorphism group $\mathrm{Aut}_{r}(X)$.
Fix two smooth functions $v>0$ and $w$ defined on
the moment map image $P_{X}$.
A $T$-invariant K\"ahler metric $\omega\in\Omega$ is called
 a $(v,w)$-constant scalar curvature K\"ahler metric
(a $(v,w)$-CSCK metric for short) if $S_{v}(\omega)=w(m_{\omega})$,
where $S_{v}(\omega)$ is the $v$-weighted scalar curvature of $\omega$ and 
$m_{\omega}$ is the moment map for the $T$-action
with respect to $\omega$.
When $v\equiv 1$, the weighted scalar curvature equals the ordinary scalar curvature.
The $(v,w)$-CSCK metrics provide a unifying framework that encompasses several
well-known classes of canonical metrics,
including
constant scalar curvature K\"ahler metrics, extremal K\"ahler metrics, 
extremal Sasaki metrics, and so on,
as reviewed in Section \ref{weighted section}.
In recent years, $(v,w)$-CSCK metrics itself have attracted considerable attention. 
The existence of $(v,w)$-CSCK metrics with log-concave weight $v$
was shown by Han-Liu \cite{HanLiu25} and Di~Nezza-Jubert-Lahdili \cite{DJL24, DJL25}
to be equivalent to the coercivity of the associated Mabuchi-type energy functional. 
Moreover a Yau-Tian-Donaldson-type correspondence was established
very recently by Boucksom-Jonsson \cite{BJ25},
building on earlier work of Li \cite{CL22}
(see also \cite{HanLiu25b, Ha25}),
characterizing the existence of $(v,w)$-CSCK metrics with log-concave weight $v$
in terms of an appropriate notion of K-stability.

In the case where $X$ is a Fano manifold and $\Omega=2\pi c_{1}(X)$,
soliton-type metrics called $g$-solitons,
where $g$ is a smooth positive function defined on $P_{X}$,
play a central role within the class of weighted CSCK metrics.
A $T$-invariant metric $\omega\in 2\pi c_{1}(X)$ is called a $g$-soliton
if $\Ric_{g}(\omega)=\omega$, where $\Ric_{g}(\omega)$ is the $g$-weighted
Ricci curvature form of $\omega$.
When $g\equiv 1$, the wighted Ricci form equals the ordinary Ricci form.
A $g$-soliton equals a $(g,w)$-CSCK metric with
$w(x)=n+\langle (\log g)^{\prime}(x),x\rangle$ for $x\in P_{X}$ \cite{AJL23}.
The notion of $g$-solitons provides a unified framework encompassing
K\"ahler-Einstein metrics, K\"ahler-Ricci solitons, Mabuchi solitons, Sasaki-Einstein metrics,
and so on, as reviewed in Section \ref{weighted section}.
Originating in Mabuchi's work \cite{Ma03} and further developed 
by Berman-Witt Nysr\"om \cite{BW14} and Han-Li \cite{HaLi20},
the existence of $g$-solitons with the log-concave weight $g$
is by now well understood to be characterized by
the coercivity of the associated Ding-type functional
as well as by an appropriate Ding-type stability condition.

Recently, there has been growing progress in the study of relationships
between the existence of two, in general different, canonical K\"ahler metrics
on a Fano manifold.
For instance, it was proved by Apostolov-Lahdili-Nitta \cite{ALN25} and
by Hisamoto and the author \cite{HN24} (see also \cite[Section 9.6]{Ma21}) that
the existence of a $T$-invariant Calabi's extremal metric \cite{Ca82} in $2\pi c_{1}(X)$
whose scalar curvature is strictly less than $n+1$
is equivalent to that of a $T$-invariant Mabuchi's soliton \cite{Ma01} in $2\pi c_{1}(X)$.
Here an extremal metric means a $(1,n+l_{\mathrm{ext}})$-CSCK metric
for an affine function $l_{\mathrm{ext}}$ on $P_{X}$
corresponds to the extremal vector field \cite{FM95} on $X$,
and a Mabuchi soliton means a $(1-l_{\mathrm{ext}})$-soliton.
More generally we already know the following.

\begin{theorem}{\rm (\cite{ALN25, HN24}, see also \cite{NN25})}\label{meta theorem}
Let $X$ be an $n$-dimensional Fano manifold.
Assume the positive weight function $g$ is log-concave on $P_{X}$.
Then the following conditions are equivalent.
\begin{enumerate}
\item There exists a $T$-invariant $(1, 1+n-g)$-CSCK metric
(i.e. $g$-extremal metric in the sense of \cite{HN24, NN25})
in $2\pi c_{1}(X)$.
\item There exists a $T$-invariant $g$-soliton in $2\pi c_{1}(X)$.
\end{enumerate}
\end{theorem}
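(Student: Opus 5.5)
The plan is the variational route of \cite{ALN25, HN24}: both existence problems are characterized by coercivity of an energy functional, and I would compare the two functionals. Write $\omega_\phi=\omega_0+\deldel\phi$ for $T$-invariant potentials with $\omega_0\in 2\pi c_1(X)$ fixed, and let $m_\phi$ be the normalized moment map, so that the pushforward of $\omega_\phi^n$ under $m_\phi$ is the Duistermaat--Heckman measure on $P_X$, independent of $\phi$. Two characterizations are taken as known (as recalled in the introduction).
\begin{itemize}
\item For the log-concave weight $g$, a $g$-soliton exists if and only if the weighted Ding functional
\[
D_g(\phi)=-E_g(\phi)-\log\int_X e^{-\phi}\,\omega_0^n
\]
is coercive modulo the action of $T^{\mathbb C}$ \cite{BW14, HaLi20}. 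Here $E_g$ is the $g$-weighted Monge--Amp\`ere energy, whose derivative is $dE_g(\phi)=\int_X \dot\phi\, g(m_\phi)\,\omega_\phi^n$.
\item A $(1,1+n-g)$-CSCK metric exists if and only if the associated weighted Mabuchi functional $M_{1,1+n-g}$ is coercive modulo $T^{\mathbb C}$. This is the Han--Liu / Di~Nezza--Jubert--Lahdili theorem with $v\equiv1$, which is trivially log-concave.
\end{itemize}
So the theorem reduces to showing that $M_{1,1+n-g}$ is coercive if and only if $D_g$ is.

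\textbf{Step 1: the Mabuchi functional.} Compute the first variation of $M_{1,1+n-g}$, which is $-\int_X\dot\phi\,(S(\omega_\phi)-1-n+g(m_\phi))\,\omega_\phi^n$. Then write it in Chen--Tian form:
\[
M_{1,1+n-g}(\phi)=\mathrm{Ent}(\phi)+\int_X (h_0-\phi)\,\omega_\phi^n + E(\phi)-E_g(\phi)+\text{const}.
\]
Here $\mathrm{Ent}(\phi)$ is the relative entropy of $\omega_\phi^n$ with respect to $e^{h_0}\omega_0^n$, and $h_0$ is the Ricci potential of $\omega_0$. The point is that the term $n+1$ combines with $\mathrm{tr}_{\omega_\phi}\omega_0$, which produces the $-\phi$ and $E$-terms. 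This formula should be a direct check by differentiating each side.

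\textbf{Step 2: Mabuchi $\ge$ Ding.} Use the Gibbs variational principle (Legendre duality between entropy and $\log\int e^{-\phi}$):
\[
\mathrm{Ent}(\phi)+\int_X(h_0-\phi)\,\omega_\phi^n\ \geq\ -\log\int_X e^{-\phi}\,\omega_0^n\cdot V .
\]
With matching normalizations this gives $M_{1,1+n-g}\geq D_g$ up to constants. Equality holds exactly when $\omega_\phi^n=e^{h_\phi}\omega_\phi^n/\!\int$, i.e. on the image of the "Ricci map" used in \cite{HN24}. Consequently coercivity of $D_g$ implies coercivity of $M$, and (1) follows from (2).

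\textbf{Step 3: Mabuchi coercive implies Ding coercive.} This is the reverse direction and the main obstacle. I would follow the strategy of \cite{HN24, ALN25}. Argue by contradiction: if $D_g$ is not coercive, then by Berman--Witt Nystr\"om / Han--Li there is a destabilizing geodesic ray, or a weighted Ding-type test configuration, along which the slope of $D_g$ is $\leq0$. One then needs the slope of $M_{1,1+n-g}$ along the same ray to be $\leq$ that of $D_g$.

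For this I would use an alternative route: run the weighted Ricci-type flow, or the continuity path $\Ric_g(\omega_t)=t\omega_t+(1-t)\omega_0$, along which $M$ decreases and $M-D_g$ stays bounded. Existence of a $(1,1+n-g)$-CSCK metric gives a lower bound on $M$ modulo $T^{\mathbb C}$, which then transfers to $D_g$ along the flow.

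The delicate points are the following.
\begin{itemize}
\item One must show that the entropy defect $M-D_g$ is controlled on the relevant sequences. This is where the log-concavity of $g$ enters, through the convexity of $D_g$ along geodesics and the uniqueness of $g$-solitons modulo $T^{\mathbb C}$.
\item One must also handle the hypothesis $S<n+1$ that appears in the Mabuchi soliton case. In the general $g$ setting I expect this to be automatic, because $n+1-S=g(m)>0$ for the CSCK metric.
\end{itemize}
Alternatively, following \cite{ALN25}, one can pass to the weighted K-stability characterization of Boucksom--Jonsson and compare the Donaldson--Futaki invariant with the Ding invariant on test configurations via the inequality $\mathrm{DF}\geq\mathrm{Ding}$. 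Uniform Ding stability for $g$ then follows from uniform K-stability as in the Fano case.
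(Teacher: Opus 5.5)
Your direction (2)$\Rightarrow$(1) is the paper's argument: its proof of Theorem \ref{main theorem} specialized to $v\equiv1$, $w=1+n-g$, for which $g(v,w)=g$. The Gibbs inequality is exactly $-\int_X\rho_\phi\,\MA(\phi)=\mathrm{Ent}(\MA(\phi)\,|\,\mu_\phi)\geq0$, i.e.\ \eqref{M>D}. Your Chen--Tian formula is misstated, though. The correct one is $M_{1,1+n-g}(\phi)=\mathrm{Ent}(\MA(\phi)\,|\,e^{\rho_0}\MA(0))+\int_X\phi\,\MA(\phi)-E_g(\phi)+\mathrm{const}$; your extra $\int_X h_0\,\omega_\phi^n$, the sign of $\phi$ and the $E(\phi)$ term spoil the first variation.

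The genuine gap is (1)$\Rightarrow$(2). None of your three routes is carried out, and each has a concrete defect.
\begin{itemize}
\item The ray and K-stability routes only reproduce the easy inequality. $M_{1,1+n-g}\geq D_g+C$ and $\mathrm{DF}\geq\mathrm{Ding}$ bound the slope of $M$ from below by that of $D_g$. Reversing this needs a weighted reduction to special test configurations, which you merely assert.
\item The continuity route uses the right path, since $\Ric_g(\omega_t)=t\omega_t+(1-t)\omega_0$ is exactly \eqref{conti-eq}, but the mechanism is wrong. First, monotonicity of $M_{1,1+n-g}$ along it is unjustified. Along the path $S(\omega_t)-1-n+g_t=-(1-t)\Delta_{\phi_t}\phi_t+\Delta_{\phi_t}\log g_t+g_t-1$. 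The Bando--Mabuchi eigenvalue argument controls $\Delta_{g,\phi_t}$ on $\MA_g(\phi_t)$, which is what $\Ric_g\geq t\omega_t$ gives, not the unweighted quantities in $M_{1,1+n-g}$.
\item Second, in the same route, ``a lower bound on $M$ transfers to $D_g$'' is backwards, since $M\geq D_g+C$. What closes the path is an \emph{upper} bound on $M(\phi_t)$ for $t>\e$. Coercivity of $M$ then bounds $\inf_{\sigma\in T^{\mathbb C}}I(\sigma[\phi_t])$, and \cite[Section 5]{HN24} gives closedness.
\item Log-concavity enters only there and in the coercivity theorems (regularity of weak solutions), not through geodesic convexity of $D_g$ or uniqueness.
\end{itemize}

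The missing idea is the estimate of Theorem \ref{bound of M}, which goes as follows.
\begin{itemize}
\item Along the path the Ricci potential is explicit (Lemma \ref{rho-eq}): $\rho_{\phi_t}=-(1-t)\phi_t+\log g(m_{\phi_t})+L(\phi_t)$.
\item Moreover $D_g(\phi_t)=-E_g(\phi_t)+L(\phi_t)\leq L(\phi_t)$, because $E_g(\phi_t)\geq0$ \cite[Proposition 4.4]{HN24}.
\item Substituting both into \eqref{M=D}, the term $L(\phi_t)$ cancels, and this matters because no a priori bound is available for it. What remains is $M(\phi_t)\leq(1-t)\int_X\phi_t\,\MA(\phi_t)+C$.
\item The equation then gives $\int_X\phi_t\,\MA(\phi_t)\leq(\inf_{P_X}g)^{-1}e^{\sup\rho_0}\int_{\{\phi_t>0\}}\phi_t e^{-t\phi_t}\MA(0)\leq C(e\e)^{-1}$.
\end{itemize}
So one never bounds the entropy defect $M-D_g$ along the path, since it contains $-L(\phi_t)$. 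Instead one bounds $M$ directly through this cancellation, and your sketch has nothing playing that role.
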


The main result of this article is an extension of Theorem \ref{meta theorem},
formulated within the framework of $(v,w)$-CSCK metrics
on Fano manifolds, which we state as follows.

\begin{theorem}\label{main theorem}
Let $X$ be an $n$-dimensional Fano manifold.
Assume the positive weight function $v$ is log-concave on $P_{X}$.
Assume also the associated weight function
\begin{equation}\label{g(v,w)}
g(v,w)(x):=v(x)(1+n+\langle(\log v)^{\prime}(x),x\rangle -w(x))
\end{equation}
is positive and log-concave on $P_{X}$.
Then the following conditions
are equivalent.
\begin{enumerate}
\item There exists a $T$-invariant $(v,w)$-CSCK metric in $2\pi c_{1}(X)$.
\item There exists a $T$-invariant $g(v,w)$-soliton in $2\pi c_{1}(X)$.
\end{enumerate}
\end{theorem}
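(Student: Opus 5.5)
The plan is to go through the variational characterizations available for both sides, as in the proofs of Theorem \ref{meta theorem} in \cite{ALN25, HN24}. By \cite{HanLiu25, DJL24, DJL25}, since $v$ is log-concave, condition (1) is equivalent to coercivity of the weighted Mabuchi functional $M_{v,w}$ modulo the complexified torus $T^{\mathbb C}$. By \cite{BW14, HaLi20}, since $g:=g(v,w)$ is positive and log-concave, condition (2) is equivalent to coercivity of the weighted Ding functional $D_g$ modulo $T^{\mathbb C}$. It therefore suffices to compare $M_{v,w}$ and $D_g$ on $T$-invariant potentials.

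First I would check the normalization. In $2\pi c_1(X)$ the weighted Futaki invariants must vanish, since this is necessary for both equations. Using $\Ric(\omega)=\omega+\deldel h_\omega$ and the formula for $S_v$ in terms of $\Ric$, $\Delta$ and $v(m_\omega)$, one computes
\[
S_v(\omega)-w(m_\omega)=v(m_\omega)^{-1}\bigl(g(m_\omega)-\Delta_{v}\text{-type terms in }h_\omega\bigr).
\]
This is the defining identity behind \eqref{g(v,w)}. It shows that the $(v,w)$-Futaki character coincides with the $g$-Futaki character, $\int \langle \xi, x\rangle g\,dx=0$ on $P_X$. In particular the obstructions for (1) and (2) agree.

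The core step is a Ding–Mabuchi comparison. Write $M_{v,w}=\mathrm{Ent}_v-(\text{energy terms})$, where $\mathrm{Ent}_v(\phi)=\int\log(\omega_\phi^n/e^{-h}\omega^n)\,v(m_\phi)\omega_\phi^n$. Using the identity above, the energy part of $M_{v,w}$ becomes $-E_g$, the $g$-weighted Monge–Ampère energy, up to a constant multiple. Then apply Jensen's inequality with respect to the probability measure $v(m_\phi)\omega_\phi^n/V_v$, or rather with respect to $g(m_\phi)\omega_\phi^n$, after rewriting the entropy via $\log(g/v)$. This should give $M_{v,w}\geq c\,D_g - C$, with equality along the Kähler–Ricci-flow-type map $\phi\mapsto\psi$ defined by $g(m_\psi)\omega_\psi^n=e^{-\psi}\cdot\text{const}$. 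That yields (2)$\Rightarrow$(1): coercivity of $D_g$ implies coercivity of $M_{v,w}$. For (1)$\Rightarrow$(2), I would follow \cite{HN24}. Given $\phi$, solve the $g$-weighted equation $g(m_{\psi})\omega_\psi^n=e^{-\phi}/\int e^{-\phi}$; this is solvable by the weighted Calabi–Yau theorem of Berman–Witt Nyström/Han–Li. Then show $D_g(\phi)\geq c\,M_{v,w}(\psi)-C$ and $d_1(0,\psi)\geq d_1(0,\phi)-C$. Here log-concavity of $g$ gives the convexity needed for the Legendre-type estimate $E_g(\psi)-E_g(\phi)\geq -\int(\psi-\phi)g\,\omega_\psi^n$.

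The main obstacle is this entropy/energy bookkeeping when $v\not\equiv g$. The entropy in $M_{v,w}$ is weighted by $v$, while the Ding side naturally produces the $g$-weighted measure. The discrepancy $\int\log(g/v)(m_\psi)\,v\,\omega_\psi^n$ must be bounded, which holds because $g$ and $v$ are positive and smooth on compact $P_X$. I would also need to verify that the linear terms $\langle(\log v)',x\rangle$ exactly cancel the difference between the $v$-weighted Laplacian of $h$ and the $g$-weighted one. I would first verify that cancellation on toric examples before trusting the general comparison. Finally, the comparison is done modulo $T^{\mathbb C}$, using the vanishing of the common Futaki character to see that both functionals are $T^{\mathbb C}$-invariant.
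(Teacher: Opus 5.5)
Your direction (2)$\Rightarrow$(1) is fine and is the paper's argument. Write $g=g(v,w)$ and $C_0=\int_X\rho_{v,0}\MA_v(0)$. Proposition~\ref{comparison} gives $M_{v,w}(\phi)-D_g(\phi)=-\int_X\rho_{v,\phi}\MA_v(\phi)+C_0$. The integral $-\int_X\rho_{v,\phi}\MA_v(\phi)$ is the relative entropy of $\MA_v(\phi)$ with respect to $\mu_\phi$, hence nonnegative; this is your Jensen step, with $c=1$. Unwound, the energy part of $M_{v,w}$ is $-E_g(\phi)+\int_X\phi\,\MA_v(\phi)$, not $-E_g$.

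The gap is in (1)$\Rightarrow$(2). The inequality $D_g(\phi)\geq c\,M_{v,w}(\psi)-C$ for $\MA_g(\psi)=\mu_\phi$ is precisely the direct comparison for which no argument is known, and your bookkeeping does not produce it. Using the identity above together with $\MA_v(\psi)=\frac{v}{g}(m_\psi)\,\mu_\phi$, one finds
\[
M_{v,w}(\psi)-D_g(\phi)=E_g(\phi)-E_g(\psi)+\int_X(\psi-\phi)\MA_v(\psi)+\int_X\log\frac{v}{g}(m_\psi)\,\MA_v(\psi)+C_0 .
\]
Concavity of $E_g$ gives only $E_g(\phi)-E_g(\psi)\leq\int_X(\phi-\psi)\MA_g(\psi)$. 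This holds for every positive $g$, so log-concavity plays no role there, and the inequality you wrote has the opposite sign. What remains is the cross term $\int_X(\phi-\psi)\bigl(\MA_g(\psi)-\MA_v(\psi)\bigr)$, which has no sign and is controlled only by $d_1(\phi,\psi)$, not by a constant.

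The bounded term $\int\log(g/v)\MA_v$ that you single out is harmless. The real obstruction is that $E_g$ is Legendre-dual to $\MA_g$, while $M_{v,w}$ pairs it with $\MA_v$. The argument closes only when $v=g$, i.e.\ $w=n+\langle(\log v)',x\rangle$, which is the trivial case where a $(v,w)$-CSCK metric already is a $v$-soliton. Choosing $\psi$ by $\MA_v(\psi)=\mu_\phi$ instead produces the same cross term. Your estimate $d_1(0,\psi)\geq d_1(0,\phi)-C$ is also left unjustified.

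The paper avoids any global comparison. It runs the continuity path $\MA_g(\phi_t)=e^{-t\phi_t+\rho_0}\MA(0)$; by \cite{HN24} this path is open, and it closes once $\inf_{\sigma\in T^{\mathbb{C}}}I(\sigma[\phi_t])$ is bounded for $t\in(\e,1)$. On the path the equation makes the weighted Ricci potential explicit:
\[
\rho_{v,t}=-(1-t)\phi_t+\log(g_t/v_t)+L(\phi_t).
\]
So in $M_{v,w}(\phi_t)=D_g(\phi_t)-\int_X\rho_{v,t}\MA_v(\phi_t)+C_0$ one uses two bounds:
\begin{itemize}
\item $D_g(\phi_t)\leq L(\phi_t)$, since $E_g(\phi_t)\geq 0$ along the path;
\item $\int_X\phi_t\MA_v(\phi_t)\leq C\e^{-1}$, from $xe^{-tx}\leq(e\e)^{-1}$.
\end{itemize}
The $L(\phi_t)$ terms cancel, giving $M_{v,w}(\phi_t)\leq C\e^{-1}$, and coercivity of $M_{v,w}$ then yields the $I$-bound modulo $T^{\mathbb{C}}$. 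Only an upper bound for $M_{v,w}$ on these particular solutions is needed. There the equation ties $\MA_g(\phi_t)$ to $e^{-t\phi_t}$, so the cross term between $\MA_g$ and $\MA_v$ never has to be estimated.
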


In particular, Theorem \ref{main theorem} shows that
the existence of a $(v,w)$-CSCK metric on a Fano manifold is governed by
the existence of a soliton-type metric,
namely the $g(v,w)$-soliton.

In view of Theorem \ref{meta theorem}, one obtains the following.

\begin{corollary}\label{sub theorem}
Under the same assumptions as in Theorem \ref{main theorem},
the following conditions are equivalent.
\begin{enumerate}
\item There exists a $T$-invariant $(v,w)$-CSCK metric in $2\pi c_{1}(X)$.
\item There exists a $T$-invariant $(1, 1+n-g(v,w))$-CSCK metric in $2\pi c_{1}(X)$.
\end{enumerate}
\end{corollary}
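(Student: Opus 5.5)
The corollary follows by chaining Theorem \ref{main theorem} with Theorem \ref{meta theorem}, applying the latter to the weight $g:=g(v,w)$ defined in \eqref{g(v,w)}. No new analysis is needed; the work is to check that the hypotheses of both theorems hold at once.

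The hypotheses of the corollary are those of Theorem \ref{main theorem}: $v>0$ is log-concave on $P_{X}$, and $g(v,w)$ is positive and log-concave on $P_{X}$. First, Theorem \ref{main theorem} shows that a $T$-invariant $(v,w)$-CSCK metric in $2\pi c_{1}(X)$ exists if and only if a $T$-invariant $g(v,w)$-soliton in $2\pi c_{1}(X)$ exists.

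Next, set $g:=g(v,w)$. This is a positive, log-concave weight function on $P_{X}$, so Theorem \ref{meta theorem} applies to it. It gives that a $T$-invariant $g(v,w)$-soliton exists if and only if a $T$-invariant $(1,1+n-g(v,w))$-CSCK metric in $2\pi c_{1}(X)$ exists.

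Combining the two equivalences yields (1)$\Leftrightarrow$(2). The only point to confirm is that $g(v,w)$ is smooth, so that it is an admissible weight in the sense used by Theorem \ref{meta theorem}. This holds because $v$ and $w$ are smooth and $v>0$, so $\log v$ and its derivative are smooth on $P_{X}$. I expect no genuine obstacle here.
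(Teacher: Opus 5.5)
Your proposal is correct and is exactly the paper's argument: the corollary is obtained by composing Theorem \ref{main theorem} with Theorem \ref{meta theorem} applied to $g=g(v,w)$, whose positivity and log-concavity are among the standing hypotheses. Besides smoothness, $g(v,w)$ must also satisfy the normalization $\int_X g(v,w)(m_\phi)\MA(\phi)=1$ to be an admissible weight, and this holds automatically: integrate \eqref{LapRicg} (with $v$ in place of $g$) against $\MA_v(\phi)$, then use the normalization $\int_X S_v(\phi)\MA_v(\phi)=\int_X w(m_\phi)\MA_v(\phi)$.
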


\begin{remark}
The author has been informed by V.~Apostolov and Y.~Nitta that
the same results as Theorem \ref{main theorem} and Corollary \ref{sub theorem} 
were obtained independently in a joint work with Apostolov-Lahdili-Nitta \cite{ALN25b}.
\end{remark}

One piece of evidence for the equivalence between the existence of $(v,w)$-CSCK metrics and
that of $g(v,w)$-solitons is that the Futaki-type invariant
$M_{v,w}^{\prime}$ of $X$, which provides an obstruction to the existence of 
$(v,w)$-CSCK metrics, coincides with another Futaki-type invariant 
$D_{g(v,w)}^{\prime}$ of $X$, which obstructs the existence of $g(v,w)$-solitons.
See Proposition \ref{FutMD}.
With a suitable formulation of the weight function $g(v,w)$,
the arguments developed by Hisamoto and the author \cite{HN24} can be extended.
A crucial step in our proof of Theorem \ref{main theorem}
is to establish Theorem \ref{bound of M} which shows
the uniform boundedness of the Mabuchi-type functional $M_{v,w}$
associated with $(v,w)$-CSCK metrics
along a continuity path used to construct a $g(v,w)$-soliton.

In Theorem \ref{main theorem},
we assume the positivity and log-concavity of the weight functions $v$ and $g(v,w)$.
The positivity assumption is essential in order for the weighted Monge-Amp\`ere measure
to be positive.
On the other hand, the log-concavity assumption is used in the regularity argument for weak solutions
\cite{DJL24, DJL25, HaLi20, HanLiu25}.
At present, it is not clear to the author whether this log-concavity assumption can be relaxed.

One of the main significances of Theorem \ref{main theorem} is that the existence of 
$(v,w)$-CSCK metrics on Fano manifolds can be captured by the existence of 
$g(v,w)$-solitons, that is, by the solvability of a complex Monge-Amp\`ere equation,
and hence can be characterized by a Ding-type stability condition,
which are deeply connected with the theory of minimal model program
in birational geometry \cite{HaLi20}.
The author expects that Theorem \ref{main theorem}
may be applied to the moduli theory of Fano manifolds admitting $(v,w)$-CSCK metrics.

An important and interesting application of $(v,w)$-CSCK metrics and $g$-solitons
is that they allow one to describe canonical Sasaki metrics on a Sasaki manifold
with a possibly irregular Reeb field
via (quasi-) regular quotients \cite{ACL21, AJL23}.
Let $X$ be a Fano manifold and $\omega_{0}\in 2\pi c_{1}(X)$ a K\"ahler metric
invariant under the action of a maximal compact torus $T$.
Let $N$ be the Sasaki manifold with the regular Reeb filed $\hat{\chi}$
associated with $(X,\omega_{0})$.
Namely $\pi:N_{\omega_{0}}\to X$ is the $S^{1}$-bundle contained in the canonical bundle $K_{X}$
with respect to the Hermitian metric on $K_{X}$ whose curvature is $-\omega_{0}$,
and $\hat{\chi}$ is the generator of the $S^{1}$-action.
Then one obtains a strongly pseudo-convex CR manifold $(N_{\omega_{0}}, D_{0},J_{0})$,
where $D_{0}$ is the horizontal distribution and $J_{0}$ is induced complex structure on $D_{0}$.
Fix the new torus $\hat{T}:=T\times S^{1}$ acting on $N_{\omega_{0}}$,
where $T$ acts on $K_{X}$ as a lift and $S^{1}$ is generated by $\hat{\chi}$.
Let $\xi\in\mathrm{Lie}(T)\subset\mathrm{Lie}(\hat{T})$
and consider $\hat{\xi}:=\xi+\hat{\chi}\in\mathrm{Lie}(\hat{T})$.
Now we assume that $\hat{\xi}$ defines a Reeb field for
$(N_{\omega_{0}}, D_{0},J_{0})$.
One has the corresponding contact $1$-form $\eta_{0}^{\hat{\xi}}$ on $N_{\omega_{0}}$
characterized by the conditions
$\eta_{0}^{\hat{\xi}}(\hat{\xi})=1$ and $\Ker\eta_{0}^{\hat{\xi}}=D_{0}$.
In these setting, as observed in Section \ref{Sasaki section},
the defining equation of a $\hat{\xi}$-transverse
extremal metric \cite{BGS08} in the basic cohomology class
$[d\eta_{0}^{\hat{\xi}}]_{B,\hat{\xi}}$,
which is in fact equal to the basic first Chern class
$2\pi c_{1}^{B,\hat{\xi}}(N_{\omega_{0}})$,
can be rewritten as the defining equation of a weighted CSCK metric
on $X$ in $2\pi c_{1}(X)$ for an appropriate weights $(v_{\xi},w_{\xi})$,
and conversely.
Similarly, the defining equation of a $\hat{\xi}$-transverse Mabuchi soliton
in $[d\eta_{0}^{\hat{\xi}}]_{B,\hat{\xi}}$
can also be rewritten as that of an appropriate soliton-type metric on $X$,
and conversely.

As shown in the following result,
an example of a $g(v,w)$-soliton
appears naturally in Sasaki geometry of $N_{\omega_{0}}$
for the possibly irregular Reeb field $\hat{\xi}$.
Moreover as observed in Section \ref{Sasaki section} 
this $g(v,w)$-soliton is associated with
weights satisfying $v(x)\not\equiv 1$ and
$w(x)\neq n+\langle (\log v)^{\prime}(x),x\rangle$
in general.

\begin{theorem}\label{Sasaki theorem}
Let $(v_{\xi},w_{\xi})$ be the weights of a weighted CSCK metric
in $2\pi c_{1}(X)$ giving rise to a $\hat{\xi}$-transverse 
extremal metric in $[d\eta_{0}^{\hat{\xi}}]_{B,\hat{\xi}}$.
Then the $g(v_{\xi}, w_{\xi})$-soliton in $2\pi c_{1}(X)$
gives rise to a $\hat{\xi}$-transverse Mabuchi soliton
in $[d\eta_{0}^{\hat{\xi}}]_{B,\hat{\xi}}$.
\end{theorem}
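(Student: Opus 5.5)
The plan is to reduce Theorem \ref{Sasaki theorem} to an explicit identity between weight functions on $P_{X}$, using the dictionary of Section \ref{Sasaki section} (following \cite{ACL21, AJL23}). That dictionary translates $\hat{\xi}$-transverse conditions on $N_{\omega_{0}}$ into weighted conditions on $X$.

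Set $\ell_{\xi}(x):=1+\langle \xi ,x\rangle$, which is positive on $P_{X}$ because $\hat{\xi}$ is a Reeb field. I first record the two translations.

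\emph{Extremal side.} A $\hat{\xi}$-transverse extremal metric in $[d\eta_{0}^{\hat{\xi}}]_{B,\hat{\xi}}$ corresponds to a $(v_{\xi},w_{\xi})$-CSCK metric on $X$ with
\[
v_{\xi}=\ell_{\xi}^{-n-1}, \qquad w_{\xi}=\ell_{\xi}^{-n-2}\,l^{\xi}_{\mathrm{ext}} .
\]
Here $l^{\xi}_{\mathrm{ext}}$ is affine on $P_{X}$, up to the normalising constants fixed in Section \ref{Sasaki section}. It is the transverse scalar curvature written as an affine function of the transverse moment map, which is $x\mapsto x/\ell_{\xi}(x)$.

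\emph{Soliton side.} In the same way, a $\hat{\xi}$-transverse Mabuchi soliton corresponds to a $g$-soliton on $X$ with
\[
g=\ell_{\xi}^{-n-2}\bigl(1-l^{\xi}_{\mathrm{Mab}}\bigr),
\]
where $l^{\xi}_{\mathrm{Mab}}$ is the affine function, again in the transverse moment coordinates, defining the transverse Mabuchi vector field.

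Next I compute $g(v_{\xi},w_{\xi})$ directly from \eqref{g(v,w)}. Since $(\log v_{\xi})^{\prime}(x)=-(n+1)\xi/\ell_{\xi}(x)$, we get
\[
1+n+\langle(\log v_{\xi})^{\prime}(x),x\rangle=\frac{n+1}{\ell_{\xi}(x)} .
\]
Substituting,
\[
g(v_{\xi},w_{\xi})=\ell_{\xi}^{-n-1}\Bigl(\frac{n+1}{\ell_{\xi}}-\ell_{\xi}^{-n-2}\,l^{\xi}_{\mathrm{ext}}\Bigr)\cdot(\text{normalisation}),
\]
which after the bookkeeping takes the form $\ell_{\xi}^{-n-2}\bigl(n+1-\tilde l\bigr)$ for an affine function $\tilde l$. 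Up to the overall positive constant, which changes neither the equation $\Ric_{g}(\omega)=\omega$ nor the weighted Ricci form, this is exactly the shape of the Mabuchi soliton weight. So the pullback to $N_{\omega_{0}}$ of a $g(v_{\xi},w_{\xi})$-soliton satisfies a transverse soliton equation $\rho^{T}-d\eta=\sqrt{-1}\partial_{B}\overline{\partial_{B}}$ of the form given by an affine potential in the transverse moment map.

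The main obstacle is to show that this affine function is the correct one, i.e.\ that $n+1-\tilde l$ agrees, after normalisation, with $1-l^{\xi}_{\mathrm{Mab}}$.

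\begin{itemize}
\item The extremal affine function $l^{\xi}_{\mathrm{ext}}$ is characterised by $L^{2}$-orthogonality conditions: $w_{\xi}$ must pair with affine functions against $v_{\xi}\,dx$ in the way dictated by the vanishing of $M^{\prime}_{v,w}$.
\item The Mabuchi affine function is characterised by the vanishing of the modified Ding-type Futaki invariant against $g\,dx$.
\end{itemize}

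To match them I would invoke Proposition \ref{FutMD}: $M^{\prime}_{v_{\xi},w_{\xi}}=D^{\prime}_{g(v_{\xi},w_{\xi})}$. Since $(v_{\xi},w_{\xi})$ is chosen so that $M^{\prime}_{v_{\xi},w_{\xi}}$ vanishes on $\mathrm{Lie}(T)$, we get $D^{\prime}_{g(v_{\xi},w_{\xi})}\equiv 0$. By the Sasaki dictionary, this is precisely the condition that singles out the transverse Mabuchi affine function among all affine functions; uniqueness of such an affine function then gives equality. If the constant normalisations cause trouble, I would instead verify the matching directly. Using the change of variables $y=x/\ell_{\xi}(x)$, the measure $\ell_{\xi}^{-n-2}dx$ on $P_{X}$ becomes the Lebesgue measure on the transverse polytope, which turns both orthogonality conditions into the standard Sasaki ones.

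Finally, the equation for $\omega$ pulls back to the $\hat{\xi}$-transverse Mabuchi soliton equation in $[d\eta_{0}^{\hat{\xi}}]_{B,\hat{\xi}}$, which proves the theorem.
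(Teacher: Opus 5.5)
\textbf{There is a genuine gap.} Your plan matches the paper's: translate both conditions to $X$ through the regular quotient, then compare $g(v_\xi,w_\xi)$ with the soliton-side weight. But the one computation that carries the proof is done with the wrong input, and the ``bookkeeping'' step hides this.

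Your extremal weight $w_\xi=\ell_\xi^{-n-2}l^{\xi}_{\mathrm{ext}}$ is in Lahdili's normalization, i.e.\ it is really $v_\xi w_\xi$ (Remark \ref{w-scalar}). Formula \eqref{g(v,w)}, however, is written for $S_v=\tr_{v,\phi}\Ric_v(\omega_\phi)$. Substituting your $w_\xi$ gives $g(v_\xi,w_\xi)=(n+1)\ell_\xi^{-n-2}-\ell_\xi^{-2n-3}l^{\xi}_{\mathrm{ext}}$. Whether you read $l^{\xi}_{\mathrm{ext}}$ as affine in $x$ or in $\mu^{\hat\xi}$, this is not $\ell_\xi^{-n-2}$ times a function affine in the transverse moment map, so it is not a Mabuchi-type weight.

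In the paper's convention, the relation used in the proof of Proposition \ref{reg-ext} together with \eqref{eta relation} gives $S^T=\ell_\xi\,S_{v_\xi}(\phi)$. Write the transverse extremal equation as $S^T-n=1-g(\mu^{\hat\xi})$ with $g$ affine on $P_{\hat\xi}$. This yields $w_\xi=(n+1-g_0)\ell_\xi^{-1}$, where $g_0(\pi^*m_\phi)=g(\mu^{\hat\xi}_{\hat\phi})$ is an affine function of $x$ divided by $\ell_\xi$. Your identity $1+n+\langle(\log v_\xi)'(x),x\rangle=(n+1)/\ell_\xi$ is correct, and with it the $(n+1)/\ell_\xi$ terms cancel exactly:
\[
g(v_\xi,w_\xi)=\ell_\xi^{-n-1}\Bigl(\frac{n+1}{\ell_\xi}-\frac{n+1-g_0}{\ell_\xi}\Bigr)=\ell_\xi^{-n-2}g_0=g_\xi .
\]
This is exactly the weight of Proposition \ref{reg-sigma}.

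This exact identity also removes your ``main obstacle''. The paper defines the transverse extremal metric and the transverse Mabuchi soliton through the \emph{same} affine $g$: $S^T-n=1-g$ and $\Ric^T-\omega^T=\deldelb\log g$. So, by Propositions \ref{reg-ext} and \ref{reg-sigma}, a $g(v_\xi,w_\xi)$-soliton on $X$ is the quotient of a transverse $g$-soliton with $g$ affine, i.e.\ a transverse Mabuchi soliton.

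No matching of affine functions via Proposition \ref{FutMD} and uniqueness is needed. No stray positive constant appears either: $g(v,w)$ is automatically normalized, since $\int_X\bigl(w(m_\phi)-n-\langle(\log v)'(m_\phi),m_\phi\rangle\bigr)\MA_v(\phi)=0$.

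Your Futaki-invariant argument could still serve as a consistency check, but only under two conditions. First, you must show that $g(v_\xi,w_\xi)$ lies in the family $\ell_\xi^{-n-2}\times(\text{affine in }\mu^{\hat\xi})$, which is exactly the computation that fails with your $w_\xi$. Second, the step ``by the Sasaki dictionary this is precisely the condition that singles out the Mabuchi affine function'' would itself need a proof.
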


We shall give a proof from a general viewpoint of
a $\hat{\xi}$-transversal $g$-extremal metric and 
a $\hat{\xi}$-transversal $g$-soliton.
See Theorem \ref{Sasaki g(v,w)}.

As an extension of the K\"ahler case, 
it is a natural problem to ask whether
the existence of a transverse extremal metric
whose transverse scalar curvature is strictly less than $n+1$
is equivalent to that of a transverse Mabuchi soliton. 
Unfortunately Theorem \ref{main theorem} and \ref{Sasaki theorem}
do not currently provide an answer to this question,
since the weight functions $v_{\xi}$ and $g(v_{\xi},w_{\xi})$
are not log-concave in general.
However recall that the log-concavity assumption on the weights
in Theorem \ref{main theorem} is a technical condition
used to obtain regularity of weak solutions
\cite{DJL24, DJL25, HaLi20, HanLiu25}.
Therefore Theorem \ref{main theorem} and \ref{Sasaki theorem} provide strong 
evidence supporting the validity of the above,
as yet unproven, equivalence.

\begin{orga}
In Section~2, we recall basic notions on weighted CSCK metrics and
soliton-type metrics, including weighted scalar curvature and
weighted Ricci curvature.
Typical situations in which these metrics appear are also reviewed.
Section~3 introduces $g(v,w)$-solitons and explains their relationship
with $(v,w)$-CSCK metrics from the viewpoint of Futaki-type invariants
and weighted energy functionals.
In Section~4, we prove Theorem \ref{main theorem} by combining
coercivity properties of weighted energy functionals with a continuity
method for $g(v,w)$-solitons.
Finally, in Section~5, 
we prove Theorem \ref{Sasaki theorem}
(and Theorem \ref{Sasaki g(v,w)} for more general viewpoint).
For this, we discuss
how $(v,w)$-CSCK metrics and $g(v,w)$-solitons arise from
Sasaki-extremal metrics and Sasaki-Mabuchi solitons via regular quotients.
\end{orga}

\begin{ackn}
The author would like to thank Vestislav Apostolov and Yasufumi Nitta for sharing
their research article and for helpful discussions and comments.
He would like to thank Tomoyuki Hisamoto for many stimulating discussions.
He also would like to thank Eiji Inoue and Abdellah Lahdili for helpful comments.
He is supported by JSPS Grant-in-Aid for Early-Career Scientists
No.~24K16917.
\end{ackn}
\section{Preliminaries on $(v,w)$-CSCK metrics and $g$-solitons}
\label{weighted section}
In this section we fix some notation to give precise definitions of
weighted CSCK metrics and soliton-type metrics.
Let $X$ be an $n$-dimensional compact K\"ahler manifold with a
K\"ahler class $\Omega$.
Let $T$ be a maximal compact torus in
the reduced automorphism group $\mathrm{Aut}_{r}(X)$,
and $\frak{t}$ be the Lie algebra of $T$.
Fix a $T$-invariant metric $\omega_{0}\in\Omega$ to define
the space of $T$-invariant K\"ahler potentials $\mathcal{H}^{T}$
with respect to $\omega_{0}$.
For any $\phi\in\mathcal{H}^{T}$,
the $T$-invariant metric $\omega_{\phi}:=\omega_{0}+\deldel\phi$
gives rise to the moment map
\begin{equation}
m_{\phi}:X\to\mathfrak{t}^{*}
\end{equation}
defined by the equation
$i_{\xi}\omega_{\phi}=-d\langle m_{\phi},\xi \rangle$ and
the normalization $\int_{X}\langle m_{\phi},\xi \rangle\omega_{\phi}^{n}=0$
for any $\xi\in\frak{t}$.
It is well known that the image $P_{X}:=m_{\phi}(X)$ is
a convex polytope
independent of the choice of $\phi\in\mathcal{H}^{T}$.
Fix a strictly positive and smooth function $v$ on $P_{X}$.
One can assume that
the weighted Monge-Amp\`ere measure
\begin{equation}
\MA_{v}(\phi):=\frac{v(m_{\phi})\omega_{\phi}^{n}}{\int_{X}\omega_{0}^{n}}
\end{equation}
defines a probability measure on $X$ without loss of generality,
since the integral $\int_{X}v(m_{\phi})\omega_{\phi}^{n}$
is independent of the choice of $\phi\in\mathcal{H}^{T}$ \cite{Lah19}.
We denote the ordinary Monge-Amp\`ere measure
$\omega_{\phi}^{n}/\int_{X}\omega_{0}^{n}$
by $\MA(\phi)$.

Following \cite{DJL24}, we introduce generalized notions of
the trace and the Laplacian in the weighted setting.

\begin{definition}(\cite[Appendix A]{DJL24})
For any $T$-invariant $(1,1)$-form $\theta$ with a moment map
$m_{\theta}$, the $v$-weighted trace of $\theta$ is defined by
\begin{equation}
\tr_{v,\phi}(\theta):=\tr_{\phi}(\theta)
+\langle (\log v)^{\prime}(m_{\phi}),m_{\theta}\rangle,
\end{equation}
where $\tr_{\phi}(\theta)$ is the ordinary trace defined by the equation
$\tr_{\phi}(\theta)\omega_{\phi}^{n}=n\theta\wedge\omega_{\phi}^{n-1}$.
Equivalently, in a fixed basis $(\xi_{1},\dots,\xi_{r})$ of $\mathfrak{t}$,
\begin{equation}
\tr_{v,\phi}(\theta)=\tr_{\phi}(\theta)+
\frac{1}{v(m_{\phi})}\sum_{j}v_{, j}(m_{\phi})m^{j}_{\theta},
\end{equation}
where $v_{, j}:=\frac{\partial v}{\partial x_{j}}$ and
$m_{\theta}=(m_{\theta}^{1},\dots,m_{\theta}^{r}).$
\end{definition}

\begin{definition}(\cite[Appendix A]{DJL24})
For any smooth function $f$ on $X$, the $v$-weighted Laplacian
for $\phi\in\mathcal{H}^{T}$ is defined by
\begin{eqnarray}
\Delta_{v,\phi}f&:=&-\frac{1}{v(m_{\phi})}
\overline{\partial}^{*}(v(m_{\phi})\overline{\partial}f)
= \Delta_{\phi} f +\frac{1}{v(m_{\phi})}
\langle\overline{\partial}v(m_{\phi}),\overline{\partial}f\rangle,
\end{eqnarray}
where $\Delta_{\phi}$ is the ordinary (negative) Laplacian for $\phi$.
In particular $\Delta_{v,\phi}f=\tr_{v,\phi}(\deldel f)$.
\end{definition}

It follows from the definition that
the operator $\Delta_{v,\phi}$ is elliptic and 
self-adjoint with respect to
the $v$-twisted Hermitian inner product
$\ip{f_{1}, f_{2}}_{v, \phi}:=\int_{X}f_{1}\overline{f_{2}}\,\MA_{v}(\phi)$
on $C^{\infty}(X;\mathbb{C})$,
and the kernel of $\Delta_{v,\phi}$ consists of the constant functions on $X$.

By considering the weighted Monge-Amp\`ere measure and
the weighted trace, one can define the notions of weighted curvatures.

\begin{definition}
The $v$-weighted Ricci curvature for $\phi\in\mathcal{H}^{T}$ is defined by
the formula
\begin{equation}
\Ric_{v}(\omega_{\phi}):=-\deldel\log\MA_{v}(\phi)
=\Ric(\omega_{\phi})-\deldel\log v(m_{\phi}),
\end{equation}
where $\Ric(\omega_{\phi}):=-\deldel\log\MA(\phi)$
is the ordinary Ricci curvature for $\phi$.
The $v$-weighted scalar curvature for $\phi\in\mathcal{H}^{T}$ is defined by
the formula
\begin{equation}
S_{v}(\phi):=\tr_{v,\phi}\Ric_{v}(\omega_{\phi}).
\end{equation}
\end{definition}

\subsection{Weighted CSCK metrics}
In order to define a weighted CSCK metric,
let $w$ be a smooth function on $P_{X}$ satisfying
$\int_{X}S_{v}(\phi)\MA_{v}(\phi)=\int_{X}w(m_{\phi})\MA_{v}(\phi)$.
Note that the integrals on both sides are independent of the choice
of a metric $\phi\in\mathcal{H}^{T}$ \cite{Lah19}.

\begin{definition}(\cite{Lah19})
A metric $\phi\in\mathcal{H}^{T}$ is called a $(v,w)$ constant scalar curvature
K\"ahler metric (a $(v,w)$-CSCK metric for short) if
\begin{equation}
S_{v}(\phi)=w(m_{\phi}).
\end{equation}
\end{definition}

\begin{remark}\label{w-scalar}
Lahdili \cite[Section 2]{Lah19} originally defined the weighted scalar curvature as
$v(m_{\phi})\tr_{v,\phi}\Ric_{v}(\omega_{\phi})$.
We adopted the definition introduced by Boucksom-Jonsson-Trusiani
\cite[Section 3]{BJT24}.
Namely, a $(v,w)$-CSCK metric as we adopted in this article is equivalent to a $(v,vw)$-CSCK
metric in the sense of Lahdili.
\end{remark}

Typical situations in which a weighted CSCK metric appears are as follows.

\begin{itemize}
\item If $v$ and $w$ are constant function, the weighted CSCK metric
is the ordinary CSCK metric.
\item If $v$ is constant and $w$ is an affine function, the weighted CSCK metric
is the extremal K\"ahler metric introduced by Calabi \cite{Ca82}.
\item If $v$ and $w$ are appropriate polynomials, 
the weighted CSCK metric describes Calabi's extremal metric
on the total space of an holomorphic fibration $Y$ with fiber $X$,
called semisimple principle fibration \cite{ACGT11, AJL23, Jub23}.
\item If $v=e^{l}$ for an affine function and $w=l+a$ for a constant
$a\in\mathbb{R}$ determined by the relation \eqref{w-formula},
the weighted CSCK metric is the $\mu$-CSCK metric,
in the sense of Inoue \cite{Ino22},
extending the notion of K\"ahler-Ricci solitons on Fano manifolds
to general K\"ahler manifolds.
\item If $\Omega=2\pi c_{1}(L)$ for an ample line bundle over $X$, 
$v=l^{-n-1}$ for a positive affine function
and $w=al^{-1}$ for a constant $a\in\mathbb{R}$,
then the weighted CSCK metric describes
a constant scalar curvature Sasaki metric
on the unit circle bundle associated with $(L^{-1})^{\times}$.
In the same setting as above, if one instead takes $w=l^{\prime}l^{-1}$
for an affine function $l^{\prime}$, the weighted CSCK metric
describes a Sasaki extremal metric
on the unit circle bundle associated with $(L^{-1})^{\times}$.
See \cite{AC21, ACL21}.
\item If $v=l^{-2n+1}$ and $w=l^{-2}$ for a positive affine function $l$,
the weighted CSCK metric describes a K\"ahler metric
which is conformal to an Einstein-Maxwell metric \cite{AM19}.
\end{itemize}
\subsection{Soliton-type metrics}
Let us
assume that $X$ is Fano and $\Omega=2\pi c_{1}(X)$.
For any metric $\phi\in\mathcal{H}^{T}$,
one obtains the Ricci potential function $\rho_{\phi}$ defined by
\begin{equation}
\Ric(\omega_{\phi})-\omega_{\phi}=\deldel\rho_{\phi}
    \quad\text{and}\quad
    \int_{X}(e^{\rho_{\phi}}-1)\MA(\phi)=0.
\end{equation}
In order to introduce soliton-type metrics,
fix a strictly positive and smooth function $g$ on $P_{X}$
such that $\MA_{g}(\phi)$ is a probability measure on $X$
for any $\phi\in\mathcal{H}^{T}$.
One can also define the $g$-weighted Ricci potential function
$\rho_{g,\phi}$ by the formulas
\begin{equation}\label{Ricg}
\Ric_{g}(\omega_{\phi})-\omega_{\phi}=\deldel\rho_{g,\phi}
    \quad\text{and}\quad
    \int_{X}(e^{\rho_{g,\phi}}-1)\MA_{g}(\phi)=0.
\end{equation}
It follows from the definitions that
$\rho_{g,\phi}=\rho_{\phi}-\log g(m_{\phi})$.

\begin{definition}(\cite{BW14, HaLi20})
A metric $\phi\in\mathcal{H}^{T}$ is called a $g$-soliton if
\begin{equation}
\Ric_{g}(\omega_{\phi})=\omega_{\phi},
\quad\text{equivalently,}\quad
\rho_{g,\phi}=0.
\end{equation}
\end{definition}

Let $\mu_{\phi}$ be the canonical measure for $\phi\in\mathcal{H}^{T}$
defined by
\begin{equation}\label{can measure}
\mu_{\phi}=
\frac{e^{-\phi+\rho_{0}}\MA(0)}{\int_{X}e^{-\phi+\rho_{0}}\MA(0)}.
\end{equation}
A $g$-soliton $\phi$ is characterized by
the complex Monge-Amp\`ere equation
$\MA_{g}(\phi)=\mu_{\phi}$.
We will use the following formula frequently later in this paper.
\begin{equation}
\mu_{\phi}=e^{\rho_{\phi}}\MA(\phi)
=e^{\rho_{g,\phi}}\MA_{g}(\phi).
\end{equation}

A $g$-soliton is a weighted CSCK metric in $2\pi c_{1}(X)$
with respect to appropriate weight functions.

\begin{proposition}{\rm (\cite{AJL23})}
A metric $\phi\in\mathcal{H}^{T}$ is a $g$-soliton
if and only if
it is a $(v,w)$-CSCK metric with the weights defined by, for any $x\in P_{X}$,
\begin{equation}\label{w-formula}
v(x)=g(x)\quad\text{and}\quad w(x)=n+\langle(\log g)^{\prime}(x),x\rangle.
\end{equation}
\end{proposition}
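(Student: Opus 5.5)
The plan is to compute the $g$-weighted scalar curvature of an arbitrary $\phi\in\mathcal{H}^{T}$ in terms of the weighted Ricci potential $\rho_{g,\phi}$. I would aim for the identity
\begin{equation}
S_{g}(\phi)=n+\langle(\log g)^{\prime}(m_{\phi}),m_{\phi}\rangle+\Delta_{g,\phi}\rho_{g,\phi}.
\end{equation}
Once this holds, both directions of the proposition follow quickly.

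\textbf{Step 1: decompose the weighted Ricci form.} By \eqref{Ricg}, $\Ric_{g}(\omega_{\phi})=\omega_{\phi}+\deldel\rho_{g,\phi}$. The weighted trace $\tr_{g,\phi}$ requires a choice of moment map for each $(1,1)$-form. I would check that the canonical moment map attached to $\Ric_{g}(\omega_{\phi})=-\deldel\log\MA_{g}(\phi)$ is the sum of two pieces:
\begin{itemize}
\item the moment map $m_{\phi}$ of $\omega_{\phi}$;
\item the canonical moment map of the exact form $\deldel\rho_{g,\phi}$, which is the one implicit in the identity $\Delta_{g,\phi}f=\tr_{g,\phi}(\deldel f)$ stated in the definition of the weighted Laplacian.
\end{itemize}
Concretely, for a $T$-invariant function $f$ the moment map of $\deldel f$ is, up to the fixed convention, the derivative of $f$ along $J\xi$. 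Applying this to $-\log\MA_{g}(\phi)$, and comparing the resulting expression with that for $\log\MA_{g}(\phi)$ of a reference metric, gives the additivity.

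This bookkeeping of moment-map conventions is the one step I expect to need real care. The point is that the normalization constants in the moment maps must match those Lahdili uses to define $S_{v}$, so that no stray constant appears. I would verify it first on the model case $g\equiv1$, where the identity reduces to the classical $S(\phi)=n+\Delta_{\phi}\rho_{\phi}$. I would then carry the weight through using $\Ric_{g}=\Ric-\deldel\log g(m_{\phi})$.

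\textbf{Step 2: evaluate the trace.} By linearity of $\tr_{g,\phi}$ and the definition of the weighted trace,
\begin{equation}
\tr_{g,\phi}(\omega_{\phi})=\tr_{\phi}(\omega_{\phi})+\langle(\log g)^{\prime}(m_{\phi}),m_{\phi}\rangle=n+\langle(\log g)^{\prime}(m_{\phi}),m_{\phi}\rangle,
\end{equation}
and $\tr_{g,\phi}(\deldel\rho_{g,\phi})=\Delta_{g,\phi}\rho_{g,\phi}$. Adding these gives the displayed formula for $S_{g}(\phi)$.

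\textbf{Step 3: conclude both directions.} Put $v=g$ and $w(x)=n+\langle(\log g)^{\prime}(x),x\rangle$.
\begin{itemize}
\item If $\phi$ is a $g$-soliton, then $\rho_{g,\phi}=0$, so $S_{g}(\phi)=w(m_{\phi})$ and $\phi$ is a $(g,w)$-CSCK metric.
\item Conversely, if $S_{g}(\phi)=w(m_{\phi})$, then $\Delta_{g,\phi}\rho_{g,\phi}=0$. Since the kernel of $\Delta_{g,\phi}$ consists of constants, $\rho_{g,\phi}\equiv c$. The normalization $\int_{X}(e^{\rho_{g,\phi}}-1)\MA_{g}(\phi)=0$, together with $\MA_{g}(\phi)$ being a probability measure, forces $e^{c}=1$. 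Hence $c=0$ and $\phi$ is a $g$-soliton.
\end{itemize}

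Finally, the compatibility condition $\int_{X}S_{g}(\phi)\MA_{g}(\phi)=\int_{X}w(m_{\phi})\MA_{g}(\phi)$ needed to define a $(g,w)$-CSCK metric also follows from the formula. Indeed $\int_{X}\Delta_{g,\phi}f\,\MA_{g}(\phi)=0$, because $\Delta_{g,\phi}$ is self-adjoint for $\ip{\cdot,\cdot}_{g,\phi}$ and annihilates constants. So the weights $(g,w)$ are admissible.
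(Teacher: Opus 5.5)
Your proposal is correct and is the paper's argument: take the $g$-weighted trace of $\Ric_{g}(\omega_{\phi})=\omega_{\phi}+\deldel\rho_{g,\phi}$ to get \eqref{LapRicg}, then use that $\Ker\Delta_{g,\phi}$ consists of constants together with the normalization of $\rho_{g,\phi}$, which are details the paper leaves implicit. One caution on Step 1: the test case $g\equiv1$ cannot detect a stray constant because the moment-map term drops out there, whereas Lemma \ref{Deltal} shows that the canonical moment map $-\Delta_{g,\phi}\langle m_{\phi},\xi\rangle$ of $\Ric_{g}(\omega_{\phi})$ equals $\langle m_{\phi},\xi\rangle+\xi(\rho_{g,\phi})-\int_{X}\langle m_{\phi},\xi\rangle\,d\mu_{\phi}$, so your additivity holds exactly when $\int_{X}m_{\phi}\,d\mu_{\phi}=0$ (i.e.\ $m_{\phi}$ induced by the canonical lift of $T$ to $K_{X}^{-1}$), which is the normalization implicitly intended in \eqref{w-formula}.
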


\begin{proof}
In general, by taking the $g$-weighted trace of the defining equation
\eqref{Ricg} of the $g$-weighted Ricci potential $\rho_{g,\phi}$,
one has the formula
\begin{equation}\label{LapRicg}
\Delta_{g,\phi}\rho_{g,\phi}
=S_{g}(\phi)-n-\langle(\log g)^{\prime}(m_{\phi}),m_{\phi}\rangle.
\end{equation}
The proposition follows directly from the formula \eqref{LapRicg}.
\end{proof}

Typical situations in which a $g$-soliton appears are as follows.

\begin{itemize}
\item If $g$ is constant, the $g$-soliton is a K\"ahler-Einstein metric.
\item If $g=e^{l}$ for an affine function $l$,
the $g$-soliton is a K\"ahler-Ricci soliton \cite{TZ00}.
\item If $g$ is a positive affine function, the $g$-soliton
is a Mabuchi soliton \cite{Ma01}.
\item If $X$ is toric Fano and $g$ is an appropriate polynomial, the $g$-soliton
describes a K\"ahler-Einstein metric on a total space of fibration $Y$ with fiber $X$,
called KSM manifolds \cite{Nakg19, NN24}.
\item If $g=l^{-n-2}$ for a positive affine function $l$,
then the $g$-soliton describes a Sasaki-Einstein metric on
the unit circle bundle associated with $(K_{X})^{\times}$ \cite{AJL23}.
\end{itemize}
\section{$g(v,w)$-solitons}
In this section we explain some background for introducing $g(v,w)$-solitons
from viewpoints of Futaki-type invariants and energy functionals.

\subsection{Futaki-type invariants}
Let $(X, \Omega)$ be a compact K\"ahler manifold.
For any affine function $l$ on $\mathfrak{t}^{*}$, we define
\begin{equation}
M^{\prime}_{v,w}(l)
:=-\int_{X}l(m_0)\Big(S_{v}(0)-w(m_0)\Big)\MA_{v}(0).
\end{equation}
Lahdili \cite{Lah19} showed that
the quantity $M^{\prime}_{v,w}(l)$ is independent of the choice
of a $T$-invariant metric $\omega_{0}\in\Omega$.
Thus if there exists a $T$-invariant $(v,w)$-CSCK metric in $\Omega$
then $M^{\prime}_{v,w}$ must vanish identically.
The invariant $M^{\prime}_{v,w}$ gives an obstruction to the existence
of a $T$-invariant $(v,w)$-CSCK metric.
We call $M^{\prime}_{v,w}$ the $(v,w)$-Futaki invariant.

Let us assume that $X$ is Fano and $\Omega=2\pi c_{1}(X)$.
Let us consider describing the $(v,w)$-Futaki invariant
using the $v$-weighted Ricci potential $\rho_{v}$.
For this we use the following lemma.
\begin{lemma}\label{Deltal}
Fix a holomorphic vector field $\xi\in\mathfrak{t}$
with the potential function $l(m_{\phi})$ for an affine function $l$.
The following equation holds.
\begin{equation}
\Delta_{v,\phi}l(m_{\phi})+l(m_{\phi})+\xi(\rho_{v,\phi})
=\int_{X}l(m_{\phi})e^{\rho_{v,\phi}}\MA_{v}(\phi).
\end{equation}
\end{lemma}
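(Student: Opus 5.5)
The plan is to decompose $\Delta_{v,\phi}$ into its ordinary part and its drift part, and to handle each with a standard identity. Write $\theta_\xi := \deldel\, l(m_\phi)$. Since $l$ is affine, $l(m_\phi) = c + \langle m_\phi, a\rangle$, where $a \in \mathfrak{t}$ corresponds to $\xi$. Thus $l(m_\phi)$ is a Hamiltonian potential for $\xi$ with respect to $\omega_\phi$, and
\begin{equation}
\Delta_{v,\phi} l(m_\phi) = \Delta_\phi l(m_\phi) + \langle (\log v)'(m_\phi), m_{\theta_\xi}\rangle ,
\end{equation}
where $m_{\theta_\xi}$ is the moment map of the $(1,1)$-form $\deldel\, l(m_\phi)$. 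This form is the Lie derivative of $\omega_\phi$ along $J\xi$ up to sign, so its moment map is $\xi(m_\phi)$, with components $\xi(\langle m_\phi, e_j\rangle)$. Consequently the drift term equals $\xi(\log v(m_\phi))$, up to the sign and normalization conventions for $\xi$ (real part of the holomorphic field) that I must fix at the start. Checking this sign is routine but it is where errors would creep in.

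Next I use the classical unweighted identity on Fano manifolds:
\begin{equation}
\Delta_\phi\, l(m_\phi) + l(m_\phi) + \xi(\rho_\phi) = \mathrm{const}.
\end{equation}
This follows because $\Delta_\phi \theta + \theta + \xi(\rho_\phi)$ is holomorphic on $X$: apply $\dbar$ and use $\Ric(\omega_\phi) - \omega_\phi = \deldel\rho_\phi$, together with the fact that $\dbar \theta$ is dual to $\xi$, so a commutation (Bochner) formula gives the result. Combining the two steps, and using $\rho_{v,\phi} = \rho_\phi - \log v(m_\phi)$, I get
\begin{equation}
\Delta_{v,\phi} l + l + \xi(\rho_{v,\phi}) = \Delta_\phi l + \xi(\log v) + l + \xi(\rho_\phi) - \xi(\log v) = \mathrm{const}.
\end{equation}
The cancellation of the $\xi(\log v)$ terms is the heart of the lemma.

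Finally I identify the constant by integrating against $\mu_\phi = e^{\rho_{v,\phi}}\MA_v(\phi)$. The key observation is the weighted integration by parts
\begin{equation}
\int_X \bigl(\Delta_{v,\phi} f + \xi(\rho_{v,\phi})\bigr) e^{\rho_{v,\phi}}\,\MA_v(\phi) = 0
\quad\text{for } f = l(m_\phi).
\end{equation}
To see it, note that $\Delta_{v,\phi}$ is self-adjoint with respect to $\MA_v$, so
\begin{equation}
\int_X (\Delta_{v,\phi} f)\, e^{\rho}\,\MA_v = -\int_X \langle \dbar f, \dbar e^{\rho}\rangle\,\MA_v = -\int_X \xi(\rho)\, e^{\rho}\,\MA_v ,
\end{equation}
since $\langle \dbar f, \dbar h\rangle = \xi(h)$ when $\dbar f$ is dual to $\xi$. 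Hence the constant equals $\int_X l(m_\phi)\, e^{\rho_{v,\phi}}\MA_v(\phi)$, and $\mu_\phi$ is a probability measure because of the normalization of $\rho_{v,\phi}$.

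The main obstacle is bookkeeping rather than a conceptual difficulty. I must keep the conventions for $\xi$ versus its $(1,0)$-part consistent across the trace formula and the Bochner identity, so that the drift term is exactly $+\xi(\log v)$ and the two $\xi(\log v)$ terms genuinely cancel.
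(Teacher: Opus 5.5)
Your argument is correct, but the first half is organized differently from the paper's.

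\textbf{How the paper does it.} The paper never splits $\Delta_{v,\phi}$ into an unweighted part and a drift. It Lie-differentiates the weighted equation $\Ric_{v}(\omega_{\phi})-\omega_{\phi}=\deldel\rho_{v,\phi}$ along $\xi$ directly. It uses $L_{\xi}\omega_{\phi}=\deldel l(m_{\phi})$ and, implicitly, $L_{\xi}\MA_{v}(\phi)=\bigl(\Delta_{v,\phi}l(m_{\phi})\bigr)\MA_{v}(\phi)$, i.e.\ the weighted Laplacian is the $\MA_{v}$-divergence of the gradient field. This gives $\deldel\bigl(\Delta_{v,\phi}l(m_{\phi})+l(m_{\phi})+\xi(\rho_{v,\phi})\bigr)=0$ in one stroke.

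\textbf{How you do it.} You invoke the classical identity $\Delta_{\phi}l(m_{\phi})+l(m_{\phi})+\xi(\rho_{\phi})=\mathrm{const}$. You then observe that the drift term $\xi(\log v(m_{\phi}))$ cancels the $-\xi(\log v(m_{\phi}))$ coming from $\rho_{v,\phi}=\rho_{\phi}-\log v(m_{\phi})$.

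The determination of the constant is the same in both. The paper's identity $\int_{X}L_{\xi}\bigl(e^{\rho_{v,\phi}}\MA_{v}(\phi)\bigr)=0$ is exactly your weighted integration by parts.

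\textbf{Trade-off.} The paper's route is shorter and stays intrinsic to the weighted setting. Yours makes visible why the weight drops out and rests on a standard unweighted identity, at the price of the convention check you flag.

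\textbf{The convention check is lighter than you suggest.} The second expression for $\Delta_{v,\phi}$ in the paper gives the drift directly as $\langle\dbar\log v(m_{\phi}),\dbar l(m_{\phi})\rangle$. So you do not need the moment map of $\deldel l(m_{\phi})$ at all. The only remaining point is that, for real $T$-invariant $h$, $\langle\dbar h,\dbar l(m_{\phi})\rangle$ and $\langle\dbar l(m_{\phi}),\dbar h\rangle$ coincide. They are complex conjugates, and their imaginary part is a multiple of the derivative of $h$ along the Killing field $\xi\in\mathfrak{t}$, which vanishes. This fixes consistently the meaning of ``$\xi(h)$'' in all three of your steps: the drift term, the classical identity, and the integration by parts.
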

\begin{proof}
Note that $L_{\xi}\omega_{\phi}=\deldel l(m_{\phi})$
by definition of the moment map.
By the Lie derivative of the defining equation
of the weighted Ricci potential \eqref{Ricg},
\begin{equation}
\Delta_{v,\phi}l(m_{\phi})+l(m_{\phi})+\xi(\rho_{v,\phi})=C
\end{equation}
for a constant $C$ which is determined by the normalization 
\begin{equation}
0=\int_{X}L_{\xi}\Big(e^{\rho_{v,\phi}}\MA_{v}(\phi)\Big)
=\int_{X}\Big(\Delta_{v,\phi}l(m_{\phi})+\xi(\rho_{v,\phi})\Big)
e^{\rho_{v,\phi}}\MA_{v}(\phi).
\end{equation}
\end{proof}

Then the weight function
\begin{equation}\label{g(v,w)}
g(v,w)(x):=v(x)\left((1+n+\langle(\log v)^{\prime}(x),x\rangle -w(x)\right)
\end{equation}
naturally appears,
as shown in the following proposition.

\begin{proposition}\label{FutMD}
For any affine function $l$ on $\mathfrak{t}^{*}$, we define
\begin{eqnarray}
D^{\prime}_{g(v,w)}(l)&:=&-\int_{X}l(m_{0})\Big(\frac{g(v,w)(m_{0})}{v(m_{0})}
-e^{\rho_{v,0}}\Big)\MA_{v}(0) \\
&=&-\int_{X}l(m_{0})\Big(g(v,w)(m_{0})-e^{\rho_{0}}\Big)\MA(0).
\end{eqnarray}
Then we have
$M^{\prime}_{v,w}(l)=D^{\prime}_{g(v,w)}(l)$.
\end{proposition}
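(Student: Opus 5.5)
The plan is to substitute the defining equation \eqref{Ricg} of the weighted Ricci potential (with $g$ replaced by $v$) into $S_{v}(0)$. Then we rewrite the Laplacian term that appears through an integration by parts combined with Lemma \ref{Deltal}.

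\emph{Step 1: split $S_v(0)-w(m_0)$.} Since $\Ric_{v}(\omega_{0})=\omega_{0}+\deldel\rho_{v,0}$, we have
\begin{equation}
S_{v}(0)=\tr_{v,0}(\omega_{0})+\Delta_{v,0}\rho_{v,0}.
\end{equation}
For the $(1,1)$-form $\omega_0$ itself the moment map is $m_0$, so $\tr_{v,0}(\omega_{0})=n+\langle(\log v)^{\prime}(m_{0}),m_{0}\rangle$. By the definition \eqref{g(v,w)} this gives
\begin{equation}
S_{v}(0)-w(m_{0})=\Big(\frac{g(v,w)(m_{0})}{v(m_{0})}-1\Big)+\Delta_{v,0}\rho_{v,0}.
\end{equation}
Hence
\begin{equation}
M^{\prime}_{v,w}(l)=-\int_{X}l(m_0)\Big(\frac{g(v,w)(m_0)}{v(m_0)}-1\Big)\MA_{v}(0)-\int_{X}l(m_0)\,\Delta_{v,0}\rho_{v,0}\,\MA_{v}(0).
\end{equation}
The first claimed identity therefore reduces to showing
\begin{equation}
\int_{X}l(m_0)\,\Delta_{v,0}\rho_{v,0}\,\MA_{v}(0)=\int_{X}l(m_0)\big(1-e^{\rho_{v,0}}\big)\MA_{v}(0).
\end{equation}

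\emph{Step 2: integrate by parts.} Since $\Delta_{v,0}=-v^{-1}\dbar^{*}(v\,\dbar\,\cdot)$ is self-adjoint for $\MA_v(0)$, the left-hand side equals $-\int_X\langle\dbar l(m_0),\dbar\rho_{v,0}\rangle\MA_v(0)$. The $(1,0)$-gradient of the potential $l(m_0)$ is the holomorphic vector field $\xi$, and $\rho_{v,0}$ is $T$-invariant. So this pairing equals $\xi(\rho_{v,0})$, which is real, and the left-hand side becomes $-\int_X\xi(\rho_{v,0})\MA_v(0)$. I will fix the sign and normalization conventions so that they match those used in the proof of Lemma \ref{Deltal}, where $L_\xi$ of the measure produced $\Delta_{v}l+\xi(\rho_v)$.

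\emph{Step 3: apply Lemma \ref{Deltal}.} Integrate the identity of Lemma \ref{Deltal} against the probability measure $\MA_v(0)$, using $\int_X\Delta_{v,0}l(m_0)\MA_v(0)=0$. This gives
\begin{equation}
-\int_X\xi(\rho_{v,0})\MA_v(0)=\int_Xl(m_0)\MA_v(0)-\int_Xl(m_0)e^{\rho_{v,0}}\MA_v(0),
\end{equation}
which is exactly the required identity. Substituting it back gives $M^{\prime}_{v,w}(l)=-\int_X l(m_0)\big(g(v,w)(m_0)/v(m_0)-e^{\rho_{v,0}}\big)\MA_v(0)$.

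\emph{Step 4: the second expression.} This follows from $\frac{g}{v}(m_0)\MA_v(0)=g(m_0)\MA(0)$ and $e^{\rho_{v,0}}\MA_v(0)=\mu_0=e^{\rho_0}\MA(0)$.

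The only delicate point is Step 2: matching the sign and factor conventions, namely the moment map sign $i_\xi\omega=-d\langle m,\xi\rangle$, the fact that $\Delta$ is the negative Laplacian, and the real versus complex vector field. The consistency check is that the same convention must produce the formula of Lemma \ref{Deltal}. I will verify this by recomputing $L_\xi\MA_v(0)=(\Delta_{v,0}l(m_0))\MA_v(0)$ in those conventions.
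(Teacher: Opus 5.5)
Your proposal is correct and is essentially the paper's own argument: rewrite $S_{v}(0)-w(m_{0})$ using \eqref{LapRicg} for the weight $v$, integrate by parts to turn $\int_{X}l(m_{0})\Delta_{v,0}\rho_{v,0}\MA_{v}(0)$ into $-\int_{X}\xi(\rho_{v,0})\MA_{v}(0)$, then integrate Lemma \ref{Deltal} against $\MA_{v}(0)$. Your signs are right, whereas the first line of the paper's displayed chain has an overall sign slip (it should read $M^{\prime}_{v,w}(l)=\int_{X}\langle\dbar l(m_{0}),\dbar\rho_{v,0}\rangle\MA_{v}(0)-\int_{X}l(m_{0})\big(\frac{g(v,w)(m_{0})}{v(m_{0})}-1\big)\MA_{v}(0)$), and your planned check $L_{\xi}\MA_{v}(0)=(\Delta_{v,0}l(m_{0}))\MA_{v}(0)$ is exactly what fixes the convention in Step 2.
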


\begin{proof}
Use the formula \eqref{LapRicg}
and Lemma \ref{Deltal} to show
\begin{eqnarray*}
M^{\prime}_{v,w}(l)&=&\int_{X}
-\langle \dbar l(m_{0}),\dbar\rho_{v,0}\rangle\MA_{v}(0)
+\int_{X}l(m_{0})\Big(\frac{g(v,w)(m_{0})}{v(m_{0})}-1\Big)\MA_{v}(0) \\
&=& \int_{X}-\xi(\rho_{v,0})\MA_{v}(0)
+\int_{X}l(m_{0})\Big(\frac{g(v,w)(m_{0})}{v(m_{0})}-1\Big)\MA_{v}(0) \\
&=&\int_{X}\Big(\Delta_{v,0}l(m_{0})+l(m_{0})
-\int_{X}l(m_{0})e^{\rho_{v,0}}\MA_{v}(0)\Big)\MA_{v}(0) \\
&&+\int_{X}l(m_{0})\Big(\frac{g(v,w)(m_{0})}{v(m_{0})}-1\Big)\MA_{v}(0) \\
&=&D^{\prime}_{g(v,w)}(l).
\end{eqnarray*}
\end{proof}

In other words, the existence of a $(v,w)$-CSCK metric and that of a $g(v,w)$-soliton
are obstructed by the same Futaki-type invariant for a Fano manifold $X$.
Proposition \ref{FutMD} provides evidence for the validity of the statement
that the existence conditions for
a $(v,w)$-CSCK metric and a $g(v,w)$-soliton are equivalent.

\subsection{Energy functionals}
Let $(X,\Omega)$ be a compact K\"ahler manifold.
Analogously to Mabuchi's definition of the Mabuchi functional \cite{Ma86}
by integrating the Futaki invariant \cite{Fu83},
one has a functional on $\mathcal{H}^{T}$
admitting a $(v,w)$-CSCK metric as its critical point
by integrating $M_{v,w}^{\prime}$.
The $(v,w)$-weighted Mabuchi functional
$M_{v,w}:\mathcal{H}^{T}\to\mathbb{R}$ is defined by
\begin{equation}
M_{v,w}(\phi):=-\int_{0}^{1}dt\int_{X}\dot{\phi}_{t}
\Big(S_{v}(\phi_{t})-w(m_{\phi_{t}})\Big)\MA_{v}(\phi_{t}),
\end{equation}
where $\phi_{t}$ is a path in $\mathcal{H}^{T}$ connecting $0$ and $\phi$.
The value $M_{v,w}(\phi)$ is independent of the choice of this path.
Indeed, a so-called Chen-Tian-type formula,
which gives a path-independent expression, is known \cite{BJT24, Lah19}.

Let us assume that $X$ is Fano and $\Omega=2\pi c_{1}(X)$.
Fix a smooth positive function $g$ on $P_{X}$.
Similar to $M_{v,w}$, one has a functional admitting a $g$-soliton
as its critical point.
The $g$-weighted Ding functional $D_{g}:\mathcal{H}^{T}\to\mathbb{R}$
is defined by
\begin{equation}
D_{g}(\phi):=-\int_{0}^{1}dt\int_{X}\dot{\phi}_{t}
\Big(g(m_{\phi_{t}})-e^{\rho_{\phi_{t}}}\Big)\MA(\phi_{t}),
\end{equation}
where $\phi_{t}$ is a path in $\mathcal{H}^{T}$ connecting $0$ and $\phi$.
Put
\begin{equation}\label{E and L}
E_{g}(\phi):=\int_{0}^{1}dt\int_{X}\dot{\phi}_{t}\MA_{g}(\phi_{t})
\quad\text{and}\quad
L(\phi):=-\log\int_{X}e^{\rho_{0}-\phi}\MA(0).
\end{equation}
Then $D_{g}=-E_{g}+L$.
These functionals are defined independently of the choice of a path $\phi_{t}$
\cite{BW14, HaLi20}.

The following proposition,
which plays important roles in our proof of Theorem \ref{main theorem},
may be viewed as an energy functionals version of Proposition \ref{FutMD}.
\begin{proposition}\label{comparison}
For any $\phi\in\mathcal{H}^{T}$, one has
\begin{equation}\label{M=D}
M_{v,w}(\phi)-D_{g(v,w)}(\phi)
=-\int_{X}\rho_{v,\phi}\MA_{v}(\phi)+\int_{X}\rho_{v,0}\MA_{v}(0).
\end{equation}
In particular, 
\begin{equation}\label{M>D}
M_{v,w}(\phi)-D_{g(v,w)}(\phi)\geq \int_{X}\rho_{v,0}\MA_{v}(0).
\end{equation}
\end{proposition}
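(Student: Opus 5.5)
Both sides of \eqref{M=D} vanish at $\phi=0$, so it is enough to show that their first variations along an arbitrary path $\phi_t$ in $\mathcal{H}^{T}$ coincide. The inequality \eqref{M>D} will then follow from Jensen's inequality.

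\textbf{Variation of the left-hand side.} By definition,
\[
\frac{d}{dt}\bigl(M_{v,w}-D_{g(v,w)}\bigr)(\phi_t)=-\int_X\dot\phi\,\bigl(S_v(\phi)-w(m_\phi)\bigr)\MA_v(\phi)+\int_X\dot\phi\,\bigl(g(v,w)(m_\phi)-e^{\rho_\phi}\bigr)\MA(\phi).
\]
I would convert the second integral to the measure $\MA_v$. Using $e^{\rho_\phi}\MA(\phi)=e^{\rho_{v,\phi}}\MA_v(\phi)$ and $g(v,w)/v=1+n+\langle(\log v)'(m_\phi),m_\phi\rangle-w(m_\phi)$, it becomes
\[
\int_X\dot\phi\,\bigl(1+n+\langle(\log v)',m\rangle-w-e^{\rho_{v,\phi}}\bigr)\MA_v(\phi).
\]
Formula \eqref{LapRicg}, applied with $g=v$, gives $S_v-n-\langle(\log v)',m\rangle=\Delta_{v,\phi}\rho_{v,\phi}$. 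The $w$ terms cancel, and the variation of the left-hand side is
\[
\int_X\dot\phi\,\bigl(1-e^{\rho_{v,\phi}}-\Delta_{v,\phi}\rho_{v,\phi}\bigr)\MA_v(\phi).
\]

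\textbf{Variation of the right-hand side.} I need the variation of $\rho_{v,\phi}$ and of $\MA_v(\phi)$. The standard weighted formulas (as in \cite{Lah19, DJL24}) are
\[
\frac{d}{dt}\MA_v(\phi)=\Delta_{v,\phi}\dot\phi\,\MA_v(\phi),\qquad \dot\rho_{v,\phi}=-\Delta_{v,\phi}\dot\phi-\dot\phi+c_t.
\]
The second comes from differentiating $-\deldel\log\MA_v(\phi)-\omega_\phi=\deldel\rho_{v,\phi}$, whose $t$-derivative gives $\deldel(-\Delta_{v}\dot\phi-\dot\phi)=\deldel\dot\rho_v$. The constant $c_t$ is fixed by differentiating the normalization $\int_X e^{\rho_{v,\phi}}\MA_v(\phi)=1$. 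Since $e^{\rho_v}\MA_v=\mu_\phi$, and $\mu_\phi$ varies as $\bigl(-\dot\phi+\int_X\dot\phi\,\mu_\phi\bigr)\mu_\phi$, this gives $c_t=\int_X\dot\phi\,e^{\rho_{v,\phi}}\MA_v(\phi)$.

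Combining these,
\[
\frac{d}{dt}\int_X\rho_v\MA_v=\int_X\bigl(-\Delta_v\dot\phi-\dot\phi+c_t+\rho_v\,\Delta_v\dot\phi\bigr)\MA_v.
\]
The term $\int_X\Delta_v\dot\phi\,\MA_v$ vanishes, and self-adjointness turns $\int_X\rho_v\Delta_v\dot\phi\,\MA_v$ into $\int_X\dot\phi\,\Delta_v\rho_v\,\MA_v$. Since $\MA_v$ is a probability measure, $\int_X c_t\,\MA_v=c_t=\int_X\dot\phi\,e^{\rho_{v,\phi}}\MA_v(\phi)$. So $-\frac{d}{dt}\int_X\rho_v\MA_v$ equals the expression obtained for the left-hand side, which proves \eqref{M=D}.

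\textbf{The inequality.} Jensen's inequality gives
\[
\int_X\rho_{v,\phi}\MA_v(\phi)\le\log\int_X e^{\rho_{v,\phi}}\MA_v(\phi)=0,
\]
so the term $-\int_X\rho_{v,\phi}\MA_v(\phi)$ in \eqref{M=D} is nonnegative, and \eqref{M>D} follows.

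\textbf{Main obstacle.} The delicate step is pinning down $\dot\rho_{v,\phi}$ with the correct weighted Laplacian, namely checking $\frac{d}{dt}\log v(m_\phi)=\langle(\log v)',\dot m\rangle$, and the correct normalization constant. I would verify the identity $\tr_{v,\phi}\deldel\dot\phi=\Delta_{v,\phi}\dot\phi$, using $m_{\deldel\dot\phi}=\dot m_\phi$ for the moment map of $\deldel\dot\phi$, before trusting the cancellation.
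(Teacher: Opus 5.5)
Your proposal is correct and takes the same route as the paper. The paper also compares first variations via $\delta\rho_{v,\phi}=-\Delta_{v,\phi}\delta\phi-\delta\phi+\int_X\delta\phi\,d\mu_\phi$, the self-adjointness of $\Delta_{v,\phi}$ and \eqref{LapRicg}, then integrates from $0$. Your use of Jensen's inequality for \eqref{M>D} in place of the paper's $e^{x}\geq 1+x$ is an inessential variation.
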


\begin{proof}
Use the variation formula
$\delta\rho_{v,\phi}=-\Delta_{v,\phi}\delta\phi-\delta\phi+\int_{X}\delta\phi d\mu_{\phi}$,
where $\mu_{\phi}$ is the canonical measure defined as \eqref{can measure},
to show
\begin{eqnarray*}
&&\delta\Big(-\int_{X}\rho_{v,\phi}\MA_{v}(\phi)\Big)\\
&=&\int_{X}\Big(\Delta_{v,\phi}\delta\phi+\delta\phi-\int_{X}\delta\phi d\mu_{\phi}\Big)
\MA_{v}(\phi)
-\int_{X}\rho_{v,\phi}(\Delta_{v,\phi}\delta\phi)\MA_{v}(\phi) \\
&=&\int_{X}\delta\phi\Big[\Big(\frac{g(v,w)(m_{\phi})}{v(m_{\phi})}
+w(m_{\phi})\Big)\MA_{v}(\phi)-d\mu_{\phi}\Big]
-\int_{X}\delta\phi S_{v}(\phi)\MA_{v}(\phi) \\
&=& \delta\left( M_{v,w}(\phi)-D_{g(v,w)}(\phi) \right).
\end{eqnarray*}
By integration, one obtains the equality \eqref{M=D}.
The inequality \eqref{M>D} follows from
the elementary inequality $e^{x}\geq 1+x$ and the normalization of $\rho_{v,\phi}$,
that is,
\begin{equation}
-\int_{X}\rho_{v,\phi}\MA_{v}(\phi)\geq\int_{X}(1-e^{\rho_{v,\phi}})\MA_{v}(\phi)=0.
\end{equation}
\end{proof}
\section{Proof of Theorem \ref{main theorem}}
In this section we prove Theorem \ref{main theorem}.
Our proof uses the coercivity theorems which characterizes
the existence of a canonical metric in terms of a behavior of a corresponding
functional on the space of K\"ahler metrics.
Let $I:\mathcal{H}^{T}\to\mathbb{R}$ be the Aubin's $I$-functional defined by
\begin{equation}
I(\phi):=\int_{X}\phi\left(\MA(0)-\MA(\phi)\right),
\end{equation}
which plays a role of a distance function in $\mathcal{H}^{T}$.
In the following theorem, we say that a functional $F:\mathcal{H}^{T}\to\mathbb{R}$ is $T^{\mathbb{C}}$-coercive if
there exists a positive constant $\e>0$ such that the inequality
\begin{equation}
F(\phi)\geq\e\inf_{\sigma\in T^{\mathbb{C}}}I(\sigma[\phi])-\e^{-1}
\end{equation}
holds for any $\phi\in\mathcal{H}^{T}$,
where $\sigma[\phi]\in\mathcal{H}^{T}$ is defined by 
$\sigma^{*}\omega_{\phi}=\omega_{0}+\deldel\sigma[\phi]$.
\begin{theorem}\label{coercivity}
Assume that the weight $v$ is log-concave on $P_{X}$.
\begin{itemize}
\item {\rm (\cite{DJL24, DJL25, HanLiu25})}
There exists a $(v,w)$-CSCK metric in $\mathcal{H}^{T}$ if and only if
the weighted Mabuchi functional $M_{v,w}$ is $T^{\mathbb{C}}$-coercive.
\item {\rm (\cite{HaLi20})} Let $X$ be a Fano manifold and $\Omega=2\pi c_{1}(X)$.
There exists a $v$-soliton in $\mathcal{H}^{T}$ if and only if
the weighted Ding functional $D_{v}$ is $T^{\mathbb{C}}$-coercive.
\end{itemize}
\end{theorem}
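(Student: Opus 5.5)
Both items of Theorem \ref{coercivity} are quoted from the literature, so I would not reprove them from scratch. Instead I would follow the variational scheme common to \cite{DJL24, DJL25, HanLiu25} and \cite{HaLi20}, separating the implication ``existence $\Rightarrow$ coercivity'' from its converse.

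The first step is to extend $M_{v,w}$ and $D_{v}$ to the finite-energy space $\mathcal{E}^{1,T}$ by lower semicontinuous extension, using the Chen--Tian-type formula for $M_{v,w}$ \cite{BJT24, Lah19}. Next I would show that both functionals are convex along weak (psh) geodesics. For $D_{v}$ this means checking two things: the weighted energy $E_{v}$ is affine along geodesics, since $\int v(m_{\phi})\omega_{\phi}^{n}$ is a topological constant and its first variation integrates to a linear term; and $L$ is convex by Berndtsson's theorem. For $M_{v,w}$ the entropy part $\int \log(\MA_{v}(\phi)/\MA_{v}(0))\MA_{v}(\phi)$ is convex by the Berman--Berndtsson argument, and the weighted energy terms are again affine along geodesics.

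For the implication ``existence $\Rightarrow$ coercivity'', I would take a $(v,w)$-CSCK metric or $v$-soliton $\phi_{*}$. By convexity it is a minimizer, and I would prove uniqueness of minimizers modulo $T^{\mathbb{C}}$ via strict convexity along geodesics that are not generated by holomorphic vector fields. Then I would apply the Darvas--Rubinstein properness principle. Using compactness of sublevel sets of the entropy and the fact that $\inf_{\sigma}I(\sigma[\cdot])$ is comparable to the quotient $d_{1}$-distance, this principle upgrades ``minimum attained exactly on a $T^{\mathbb{C}}$-orbit'' to $T^{\mathbb{C}}$-coercivity.

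For the converse I would argue as follows. Coercivity, together with entropy compactness in $\mathcal{E}^{1}$ and lower semicontinuity, yields a minimizer $\phi\in\mathcal{E}^{1,T}$. The step I expect to be the main obstacle is showing that this weak minimizer is smooth, and this is exactly where log-concavity of $v$ enters.

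In the Ding case I would first show that the minimizer solves weakly the equation $v(m_{\phi})\omega_{\phi}^{n}=e^{-\phi+\rho_{0}}\omega_{0}^{n}$. I would then obtain regularity by a continuity/approximation argument as in \cite{HaLi20}. Since $\log v$ is concave, the Laplacian estimates (Aubin--Yau/Chern--Lu type) close despite the term $\deldel\log v(m_{\phi})$, because that term has a favourable sign.

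In the Mabuchi case I would instead run the Chen--Cheng continuity path $t(S_{v}(\phi)-w(m_{\phi}))=(1-t)(\tr_{v,\phi}\omega_{0}-n)$. I would rewrite it as the coupled system
\begin{equation}
v(m_{\phi})\omega_{\phi}^{n}=e^{F}\omega_{0}^{n},\qquad \Delta_{v,\phi}F=-\,\text{(weighted scalar curvature terms)}
\end{equation}
and derive a priori estimates for this system. These estimates are controlled by the entropy, which coercivity bounds uniformly along the path; again log-concavity of $v$ is what keeps the weighted $C^{2}$ estimate in the direction needed. Closedness of the path then gives a smooth solution at $t=1$, which is the desired $(v,w)$-CSCK metric.
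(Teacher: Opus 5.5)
The paper itself contains no proof of Theorem~\ref{coercivity}. Both items are imported from \cite{DJL24, DJL25, HanLiu25} and \cite{HaLi20}, so treating them as quoted results is exactly the paper's route. Your architecture is also the one used in those works: convexity plus the Darvas--Rubinstein principle for ``existence $\Rightarrow$ coercivity'', and a Chen--Cheng-type continuity path, resp.\ regularity of weak solutions, for the converse. Your Ding item is essentially fine, since Berndtsson's theorem supplies both the convexity of $L$ and the rigidity ``affine along a bounded geodesic $\Rightarrow$ induced by a holomorphic vector field''.

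Read as an outline of the Mabuchi item, however, three steps do not go through as written. First, the convexity split is wrong. The pieces of $M_{v,w}$ that are affine along geodesics are the pure weighted energies $\int_0^1\!\int_X\dot\phi_t\,u(m_{\phi_t})\,\omega_{\phi_t}^n\,dt$, such as $E_v$, or the $w$-term with $u=vw$. The reason is that $\int_X(\ddot\phi-|\dbar\dot\phi|^2_{\phi})\,u(m_\phi)\,\omega_\phi^n=0$ on smooth geodesics, not that $\int_X v(m_\phi)\omega_\phi^n$ is constant. The Chen--Tian formula for $M_{v,w}$ also contains an energy twisted by $\Ric_v(\omega_0)$, and this term is not affine. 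Already for $v\equiv1$ its second variation along a smooth geodesic is $\pm\int_X\Ric(\omega_0)(\nabla^{1,0}\dot\phi,\overline{\nabla^{1,0}\dot\phi})\,\MA(\phi)$, which does not vanish in general. Correspondingly the entropy alone is not convex in general. What Berman--Berndtsson (and the weighted analogues) prove is convexity of the sum of the entropy and this Ricci-twisted energy.

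Second, strict convexity of $M_{v,w}$ along finite-energy geodesics not induced by holomorphic vector fields is not known. So you cannot obtain uniqueness of minimizers in $\mathcal{E}^{1,T}$ modulo $T^{\mathbb{C}}$ that way. The standard substitute is the Berman--Berndtsson perturbation argument combined with regularity of weak minimizers in the style of Berman--Darvas--Lu. Without such an argument, the key hypothesis of the Darvas--Rubinstein principle (minimizers form a single $T^{\mathbb{C}}$-orbit) is unverified.

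Third, your continuity path has no solutions for $t\in(0,1)$ in general. Pairing it with $\MA_v(\phi)$ gives $0$ on the left by the normalization of $w$, while
\[
\int_X(\tr_{v,\phi}\omega_0-n)\,\MA_v(\phi)=\int_X\langle(\log v)'(m_0),m_0\rangle\,\MA_v(0),
\]
which is nonzero, e.g.\ for $v=e^{l}$ with $l$ non-constant. The twist has to be $\tr_{v,\phi}\omega_0-\tr_{v,\phi}\omega_\phi=\tr_{v,\phi}\omega_0-n-\langle(\log v)'(m_\phi),m_\phi\rangle$. Relatedly, concavity of $\log v$ only gives a sign for the Hessian part of $\deldel\log v(m_\phi)$. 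The first-order part $\sum_j(\log v)_{,j}\deldel m^j_\phi$ is unsigned and must be handled separately in the $C^2$ estimates.
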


Let us focus on the case where $X$ is Fano and $\Omega=2\pi c_{1}(X)$
to prove Theorem \ref{main theorem}.
One direction is easier to follow.
In fact it follows from Theorem \ref{coercivity} and
the inequality \eqref{M>D} in Proposition \ref{comparison}
that the existence of a $g(v,w)$-soliton implies that of a $(v,w)$-CSCK metric.
As for the converse direction, there is currently no known method 
to directly deduce the coercivity of $D_{g(v,w)}$ from that of $M_{v,w}$.

Our approach is to construct a $g(v,w)$-soliton 
by a continuity method under the assumption on
the $T^{\mathbb{C}}$-coercivity of $M_{v,w}$.
Let us consider the following PDE for $\phi\in\mathcal{H}^{T}$:
\begin{equation}\label{conti-eq}
\MA_{g(v,w)}(\phi)=e^{-t\phi+\rho_{0}}\MA(0),
\end{equation}
where $t\in [0,1]$.
A solution of the equation \eqref{conti-eq} at $t=1$ is nothing but a $g(v,w)$-soliton.
Let $\mathcal{T}$ be the set of $t\in[0,1]$ such that the equation \eqref{conti-eq}
has a solution in $\mathcal{H}^T$.
According to \cite[Section 3]{HN24},
the set $\mathcal{T}$ is non-empty and open in general.
Moreover, according to \cite[Section 5]{HN24},
if for any $\e\in(0,1)$ there exists a constant $C>0$ independent of $t$ such that
\begin{equation}
\inf_{\sigma\in T^{\mathbb{C}}}I(\sigma[\phi_{t}])\leq C
\end{equation}
holds for any solution $\phi_{t}\in\mathcal{H}^{T}$ of \eqref{conti-eq}
at any $t\in\mathcal{T}\cap(\e,1)$,
then the set $\mathcal{T}$ is closed, which implies $1\in\mathcal{T}$.
In view of the assumption on the $T^{\mathbb{C}}$-coercivity of $M_{v,w}$,
the rest of our proof of Theorem \ref{main theorem} is deduced to
that of the following.

\begin{theorem}\label{bound of M}
Fix any $\e\in(0,1)$.
Let $\phi_t$ be a $T$-invariant solution of \eqref{conti-eq} at $t\in(\e,1)$.
There exists a constant $C>0$ independent of $\e$ and $t$ satisfying
\begin{equation}\label{estimate of M}
M_{v,w}(\phi_t)\leq C\e^{-1}.
\end{equation}
\end{theorem}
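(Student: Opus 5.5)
The plan is to split $M_{v,w}(\phi_t)$ using the identity \eqref{M=D} of Proposition \ref{comparison}:
\begin{equation}
M_{v,w}(\phi_t)=D_{g(v,w)}(\phi_t)-\int_X\rho_{v,\phi_t}\MA_v(\phi_t)+\int_X\rho_{v,0}\MA_v(0).
\end{equation}
The last term is a fixed constant, so it suffices to bound the first two terms from above by $C\e^{-1}$. Throughout I write $g:=g(v,w)$. Since $g$ and $v$ are smooth and positive on the compact polytope $P_X$, the function $\log(g/v)$ is bounded on $P_X$, and this is used repeatedly.

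\emph{Step 1: the Ding term.} I would use the twisted Ding functional
\begin{equation}
D_t(\phi):=-E_g(\phi)-\frac1t\log\int_X e^{-t\phi+\rho_0}\MA(0),
\end{equation}
whose critical points are exactly the solutions of \eqref{conti-eq}. By Berndtsson's convexity along weak geodesics, together with the convexity of $E_g$ as in \cite{BW14, HaLi20}, $D_t$ is convex, so $\phi_t$ minimizes it and $D_t(\phi_t)\le D_t(0)=0$. Hölder's inequality gives $\int e^{-t\phi}\,d\mu_0\le(\int e^{-\phi}\,d\mu_0)^t$, hence $L(\phi)\le -\frac1t\log\int e^{-t\phi}\,d\mu_0$. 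Therefore $D_g(\phi_t)=-E_g(\phi_t)+L(\phi_t)\le D_t(\phi_t)\le 0$. (If convexity is not available in the smooth setting, I would instead differentiate $D_t(\phi_t)$ in $t$ along the path, as in \cite[Section 5]{HN24}.) A by-product is useful later: since $\MA_g(\phi_t)$ is a probability measure, the equation forces $\int e^{-t\phi_t}\,d\mu_0=1$, and then $D_t(\phi_t)\le 0$ gives $E_g(\phi_t)\ge 0$.

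\emph{Step 2: the Ricci potential term.} Equation \eqref{conti-eq} gives $\Ric_g(\omega_{\phi_t})=t\omega_{\phi_t}+(1-t)\omega_0$. Hence $\rho_{g,\phi_t}=-(1-t)\phi_t+c_t$, and so
\begin{equation}
\rho_{v,\phi_t}=-(1-t)\phi_t+\log\frac{g}{v}(m_{\phi_t})+c_t,
\end{equation}
with $c_t$ fixed by $\int e^{\rho_{v,\phi_t}}\MA_v(\phi_t)=1$, that is $c_t=-\log\int e^{-(1-t)\phi_t}\MA_g(\phi_t)$. Substituting,
\begin{equation}
-\int_X\rho_{v,\phi_t}\MA_v(\phi_t)\le (1-t)\int_X\phi_t\MA_v(\phi_t)+\log\int_X e^{-(1-t)\phi_t}\MA_g(\phi_t)+C.
\end{equation}
Using the equation, $\int e^{-(1-t)\phi_t}\MA_g(\phi_t)=\int e^{-\phi_t}\,d\mu_0$. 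The task is thus reduced to bounding
\begin{equation}
(1-t)\int_X\phi_t\MA_v(\phi_t)-L(\phi_t)
\end{equation}
from above.

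\emph{Step 3: the main obstacle.} This last quantity involves an integral of $\phi_t$ against $\MA_v(\phi_t)$ and the $L$-functional, and neither is controlled a priori. I would proceed as follows.
\begin{itemize}
\item Compare $\int\phi_t\MA_v(\phi_t)$ with $\sup\phi_t$, using that $\MA_v(\phi_t)$ is a probability measure.
\item Compare $\sup\phi_t$ with $E_g(\phi_t)$ up to $I$-type quantities, using the standard inequalities between $E_g$, $\int\phi\,\MA_g(\phi)$ and $\int\phi\,\MA(0)$ from the weighted pluripotential theory of \cite{BW14, HaLi20}.
\item Bound $-L(\phi_t)=\log\int e^{-\phi_t}\,d\mu_0$ by interpolating between the normalization $\int e^{-t\phi_t}\,d\mu_0=1$ and the identity $\int e^{-\phi_t}\,d\mu_0=\int e^{-(1-t)\phi_t}\MA_g(\phi_t)$. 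A Hölder/Jensen argument with exponent of order $1/t\le 1/\e$ is where I expect the factor $\e^{-1}$ in \eqref{estimate of M} to arise.
\end{itemize}
The delicate point is making the oscillation-type terms that appear in these interpolations cancel against the favorable sign $E_g(\phi_t)\ge 0$ obtained in Step 1, rather than merely estimating them. If the direct interpolation fails, my fallback is to differentiate the quantity $-\int\rho_{v,\phi_t}\MA_v(\phi_t)$ in $t$ along the path. Its derivative can be computed from the variation formula for $\rho_v$ in the proof of Proposition \ref{comparison}, and the hope is that it is nonpositive up to a term of size $O(t^{-2})$; integrating from $\e$ to $t$ would then give the $C\e^{-1}$ bound.
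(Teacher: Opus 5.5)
\textbf{There is a genuine gap, and it starts with a bound you gave away in Step 1.} Your own by-product $E_g(\phi_t)\geq 0$ already gives $D_g(\phi_t)=-E_g(\phi_t)+L(\phi_t)\leq L(\phi_t)$. You then weakened this to $D_g(\phi_t)\leq 0$ through the H\"older step, which amounts to using $L(\phi_t)\leq 0$. But in Step 2 you correctly computed the normalizing constant $c_t=-\log\int_X e^{-\phi_t}\,d\mu_0=L(\phi_t)$. So $-\int_X\rho_{v,\phi_t}\MA_v(\phi_t)$ contains exactly the term $-L(\phi_t)$. If you keep $D_g(\phi_t)\leq L(\phi_t)$, the two $L$-terms cancel identically, and \eqref{M=D} gives
\[
M_{v,w}(\phi_t)\leq (1-t)\int_X\phi_t\,\MA_v(\phi_t)-\log\Bigl(\inf_{P_X}\frac{g}{v}\Bigr)+\int_X\rho_{v,0}\MA_v(0).
\]
Because you discarded this cancellation, you are left needing an upper bound on $-L(\phi_t)=\log\int_X e^{-\phi_t}\,d\mu_0\geq 0$ by itself. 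The only evident bound is $(1-t)(-\inf_X\phi_t)$, which is not controlled a priori. Your H\"older/Jensen interpolation in Step 3 is only sketched and gives no indication of how it would close. Also, the sign $E_g(\phi_t)\geq 0$ has already been spent at that point, so there is nothing left for the oscillation terms to cancel against.

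\textbf{The second gap is the bound on $\int_X\phi_t\,\MA_v(\phi_t)$.} Going through $\sup_X\phi_t$ and then $E_g$ up to $I$-type terms cannot produce the stated estimate. The statement is an a priori bound along the path with no coercivity hypothesis; it is exactly the input that, combined with coercivity of $M_{v,w}$, yields the bound on $\inf_{\sigma}I(\sigma[\phi_t])$ needed for closedness. Meanwhile $\sup_X\phi_t$ and $I(\phi_t)$ are not a priori bounded: they blow up when the path degenerates. The missing idea is to use the equation directly. Since $\MA_v(\phi_t)=\frac{v}{g}(m_{\phi_t})\,e^{-t\phi_t+\rho_0}\MA(0)$, you can drop the region $\{\phi_t\leq 0\}$ and obtain
\[
\int_X\phi_t\,\MA_v(\phi_t)\leq e^{\sup\rho_0}\Bigl(\inf_{P_X}\frac{g}{v}\Bigr)^{-1}\int_{\{\phi_t>0\}}\phi_t\, e^{-t\phi_t}\MA(0)\leq e^{\sup\rho_0-1}\Bigl(\inf_{P_X}\frac{g}{v}\Bigr)^{-1}\e^{-1}.
\]
This uses $xe^{-tx}\leq (et)^{-1}<(e\e)^{-1}$ for $x>0$ and $t>\e$. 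That elementary inequality, not a H\"older exponent of order $1/t$, is where the factor $\e^{-1}$ comes from.

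Your Step 1 derivation of $E_g(\phi_t)\geq 0$ via convexity of the twisted Ding functional is a legitimate substitute for the cited Proposition 4.4 of [HN24]. The rest of the argument needs to be replaced as above.
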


In the rest of this section we prove Theorem \ref{bound of M}.
Let $\phi_{t}\in\mathcal{H}^T$ be a solution of the equation \eqref{conti-eq} at $t$.
In order to obtain the upper bound of $M_{v,w}(\phi_{t})$, 
it suffices to control $D_{g(v,w)}(\phi_{t}) -\int_{X}\rho_{v,\phi_{t}}\MA_{v}(\phi_{t})$
from above by the formula \eqref{M=D} in Proposition \ref{comparison}.
Put $\omega_{\phi_{t}}:=\omega_{t}$,
$\rho_{t}:=\rho_{\phi_{t}}$, $\rho_{v,t}:=\rho_{v,\phi_{t}}$,
$v_{t}:=v(m_{\phi_{t}})$ and $g_{t}:=g(v,w)(m_{\phi_{t}})$ for simplicity.

By definition of the weighted Ding functional $D_{g(v,w)}$ and \cite[Proposition 4.4]{HN24},
\begin{equation}\label{Dg-estimate}
D_{g(v,w)}(\phi_{t})=-E_{g(v,w)}(\phi_{t})+L(\phi_{t})\leq L(\phi_{t}).
\end{equation}
On the other hand, we observe the following to control 
$-\int_{X}\rho_{v,\phi_{t}}\MA_{v}(\phi_{t})$.

\begin{lemma}\label{rho-eq}
Let $\phi_{t}\in \mathcal{H}^T$ be a solution of \eqref{conti-eq} at $t$. 
One has the formula
\begin{equation}\label{rho-formula}
\rho_{v,t}=-(1-t)\phi_{t}+\log\frac{g_{t}}{v_{t}}+L(\phi_{t}),
\end{equation}
where $L$ is the functional defined in \eqref{E and L}.
\end{lemma}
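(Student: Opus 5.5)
The plan is to compare the continuity equation \eqref{conti-eq} directly with the defining property of the canonical measure $\mu_{\phi_t}$, using the identity $\mu_{\phi}=e^{\rho_{v,\phi}}\MA_{v}(\phi)$. That identity holds for any positive weight $v$ with $\MA_v(\phi)$ a probability measure, exactly as for $g$, since $\rho_{v,\phi}=\rho_\phi-\log v(m_\phi)$.

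First, by \eqref{can measure} and the definition of $L$ in \eqref{E and L},
\begin{equation}
\mu_{\phi_t}=e^{-\phi_t+\rho_0+L(\phi_t)}\MA(0).
\end{equation}
Second, rewrite \eqref{conti-eq} as
\begin{equation}
e^{-t\phi_t+\rho_0}\MA(0)=\MA_{g(v,w)}(\phi_t)=\frac{g_t}{v_t}\MA_v(\phi_t).
\end{equation}
Here we use that $\MA_{g(v,w)}(\phi)=g(m_\phi)\omega_\phi^n/\int_X\omega_0^n$ and similarly for $v$; both are probability measures by the standing normalization, which is consistent with Proposition \ref{FutMD}.

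Combining the two gives
\begin{equation}
e^{\rho_{v,t}}\MA_v(\phi_t)=\mu_{\phi_t}=e^{-(1-t)\phi_t+L(\phi_t)}\,e^{-t\phi_t+\rho_0}\MA(0)=e^{-(1-t)\phi_t+L(\phi_t)}\frac{g_t}{v_t}\MA_v(\phi_t).
\end{equation}
Since $\MA_v(\phi_t)$ is a smooth positive volume form (because $v>0$), we may divide by it and take logarithms. This yields \eqref{rho-formula}.

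As a consistency check, the normalization $\int_X e^{\rho_{v,t}}\MA_v(\phi_t)=1$ is automatic, because $\mu_{\phi_t}$ is a probability measure. So no additive constant is lost; that constant is exactly what $L(\phi_t)$ accounts for.

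The only delicate point is bookkeeping of normalizations: we must check that $\mu_\phi=e^{\rho_{v,\phi}}\MA_v(\phi)$ holds exactly, with no extra constant. This follows from $\rho_{v,\phi}=\rho_\phi-\log v(m_\phi)$ together with $\mu_\phi=e^{\rho_\phi}\MA(\phi)$, the latter being the stated formula in the case $g\equiv1$. Beyond that, the argument is a direct computation.
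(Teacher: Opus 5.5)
Your proof is correct. It is organized somewhat differently from the paper's.

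The paper works at the level of forms. It writes $\Ric_{v}(\omega_{t})=\deldel\log(g_{t}/v_{t})+\Ric_{g(v,w)}(\omega_{t})$ and uses \eqref{conti-eq} to obtain $\deldel\rho_{v,t}=\deldel\bigl(\log(g_{t}/v_{t})-(1-t)\phi_{t}\bigr)$. This gives $\rho_{v,t}=-(1-t)\phi_{t}+\log(g_{t}/v_{t})+c_{t}$. Only afterwards does it identify $c_{t}=L(\phi_{t})$, from the normalization $\int_{X}e^{\rho_{v,t}}\MA_{v}(\phi_{t})=1$ together with \eqref{conti-eq}.

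You instead work at the level of measures. You compare two expressions for the canonical measure, $e^{\rho_{v,t}}\MA_{v}(\phi_{t})$ and $e^{-\phi_{t}+\rho_{0}+L(\phi_{t})}\MA(0)$. The $\partial\bar\partial$-lemma step and the normalization are already packaged into the identity $\mu_{\phi}=e^{\rho_{v,\phi}}\MA_{v}(\phi)$. You justify that identity correctly, including that $\rho_{v,\phi}=\rho_{\phi}-\log v(m_{\phi})$ holds with no additive constant because $\MA_{v}(\phi)$ is a probability measure. As a result, $L(\phi_{t})$ comes out automatically rather than being computed in a separate step.

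The two computations are equivalent: the paper's determination of $c_{t}$ is exactly your chain of measure identities, integrated. Your version is shorter and makes the origin of the constant transparent. The paper's version displays the curvature relation between $\Ric_{v}$ and $\Ric_{g(v,w)}$, which is the geometric motivation for the weight $g(v,w)$.

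Your side remark that $\MA_{g(v,w)}(\phi)$ is a probability measure is also right; it is the case $l\equiv 1$ of Proposition \ref{FutMD}. Your algebra does not actually need it, since $\MA_{g(v,w)}(\phi)=(g(v,w)(m_{\phi})/v(m_{\phi}))\MA_{v}(\phi)$ holds by definition.
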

Note that by the compactness of $X$,
there exists a uniform constant $C>0$ independent of
the choice of $\phi\in\mathcal{H}^{T}$ such that
\begin{equation}
C^{-1}\leq \frac{g(v,w)(m_{\phi})}{v(m_{\phi})} \leq C.
\end{equation}

\begin{proof}
By definition of the weighted Ricci curvature,
\begin{equation}\label{wRic-formula}
\Ric_{v}\omega_{t}=\deldel\log\frac{g_{t}}{v_{t}}+\Ric_{g(v,w)}(\omega_{t}).
\end{equation}
By the formula \eqref{wRic-formula}, definition of the $v$-weighted Ricci potential 
and the equation \eqref{conti-eq},
\begin{equation}
\deldel\rho_{v,t}=\deldel\left(\log\frac{g_{t}}{v_{t}}-(1-t)\phi_{t}\right).
\end{equation}
Thus $\rho_{v,t}=-(1-t)\phi_{t}+\log g_{t}/v_{t}+c_{t}$
for some constant $c_{t}$ depending on $t$.
The constant $c_{t}$ is determined by the normalization of $\rho_{v,t}$
and the equation \eqref{conti-eq}.
In fact $c_{t}=L(\phi_{t})$.
This completes the proof.
\end{proof}

Integrating the both side of the relation in Lemma \ref{rho-eq},
one thus obtains
\begin{equation}
-\int_{X}\rho_{v,\phi_{t}}\MA_{v}(\phi_{t})
\leq
(1-t)\int_{X}\phi_{t}\MA_{v}(\phi_{t}) -\log\left(\inf_{P_{X}} \frac{g(v,w)}{v}\right)-L(\phi_t).
\end{equation}

\begin{lemma}\label{int-phi}
For a fixed $\e\in (0,1)$,
a solution $\phi_{t}$ at $t>\e$ satisfies
\begin{equation}
\int_{X}\phi_{t}\MA_{v}(\phi_{t})
\leq e^{\sup\rho_{0}-1}\left(\inf_{P_{X}} \frac{g(v,w)}{v}\right)^{-1}\e^{-1}.
\end{equation}
\end{lemma}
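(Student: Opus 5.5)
Rewrite the integral against the measure appearing in \eqref{conti-eq}, then bound it pointwise by an elementary inequality. Since $\MA_{v}(\phi_{t})=\frac{v_{t}}{g_{t}}\MA_{g(v,w)}(\phi_{t})$, equation \eqref{conti-eq} gives
\begin{equation}
\int_{X}\phi_{t}\MA_{v}(\phi_{t})
=\int_{X}\phi_{t}\frac{v_{t}}{g_{t}}e^{-t\phi_{t}+\rho_{0}}\MA(0).
\end{equation}
The aim is to estimate the integrand from above by a constant independent of $\phi_{t}$, using only that $\MA(0)$ is a probability measure. Throughout, write $c:=\inf_{P_{X}} g(v,w)/v$, which is positive by the preceding remark.

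The key step is the one-variable bound $s e^{-ts}\leq \frac{1}{et}$ for all $s\in\mathbb{R}$ and $t>0$. For $s\leq 0$ the left side is nonpositive; for $s>0$ the maximum of $se^{-ts}$ is attained at $s=1/t$. Apply it pointwise with $s=\phi_{t}$. Where $\phi_{t}\geq 0$, use $v_{t}/g_{t}\leq c^{-1}$ and $e^{\rho_{0}}\leq e^{\sup\rho_{0}}$; where $\phi_{t}<0$, the integrand is nonpositive and may simply be dropped. This gives
\begin{equation}
\int_{X}\phi_{t}\MA_{v}(\phi_{t})\leq c^{-1}e^{\sup\rho_{0}}\cdot\frac{1}{et}\int_{X}\MA(0)
= e^{\sup\rho_{0}-1}\,c^{-1}\,t^{-1}.
\end{equation}
Since $t>\e$, we have $t^{-1}<\e^{-1}$, which is exactly the claimed bound.

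The only point needing care is the sign bookkeeping when splitting into $\{\phi_{t}\geq0\}$ and $\{\phi_{t}<0\}$. The bound $v_{t}/g_{t}\leq c^{-1}$ is used only where $\phi_{t}\geq 0$, and the negative part only decreases the integral. No normalization of $\phi_{t}$ is required beyond the equation itself. I therefore expect no real obstacle: the argument reduces to maximizing $se^{-ts}$.
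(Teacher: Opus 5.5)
Your proposal is correct and matches the paper's proof: rewrite $\MA_{v}(\phi_{t})=\frac{v_{t}}{g_{t}}\MA_{g(v,w)}(\phi_{t})$, substitute the continuity equation \eqref{conti-eq}, drop the region where $\phi_{t}\leq 0$, bound $v_{t}/g_{t}$ and $e^{\rho_{0}}$ by their extremes, and apply the elementary bound on $xe^{-tx}$. The only difference is cosmetic: you bound $se^{-ts}$ by $(et)^{-1}$ and then use $t>\e$, whereas the paper bounds it by $(e\e)^{-1}$ directly.
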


\begin{proof}
Using the equation \eqref{conti-eq} of the continuity method,
one obtains
\begin{eqnarray*}
\int_{X}\phi_{t}\,\MA_{v}(\phi_{t})
&=&
\int_{X}\phi_{t}\frac{v_{t}}{g_{t}}\,\MA_{g}(\phi_{t})\\
&\leq&
\int_{\{\phi_{t}>0\}}\phi_{t}\frac{v_{t}}{g_{t}}e^{-t\phi_{t}+\rho_{0}}
\,\MA(0)\\
&\leq&
e^{\sup\rho_{0}}\left(\inf_{P_{X}} \frac{g(v,w)}{v}\right)^{-1}
\int_{\{\phi_{t}>0\}}\phi_{t}e^{-t\phi_{t}}\MA(0).
\end{eqnarray*}
Note that for any $t>\e$, the function $\mathbb{R}_{>0}\ni x\mapsto xe^{-tx}$
is bounded from above by the constant $(e\e)^{-1}$.
This completes the proof.
\end{proof}

Putting together the above arguments,
Theorem \ref{bound of M} follows
from the relation \eqref{M=D}, the estimate \eqref{Dg-estimate} and Lemma \ref{int-phi}.
This completes the proof of Theorem \ref{main theorem}.
\section{$(v,w)$-CSCK metrics and $g(v,w)$-solitons in Sasaki geometry}
\label{Sasaki section}
The purpose of this section is to prove Theorem \ref{Sasaki theorem}
(and Theorem \ref{Sasaki g(v,w)} for more general viewpoint)
explaining how the weight function $g(v,w)$
arises naturally from Sasaki geometry.
For this, we discuss Sasaki-extremal metrics and Sasaki-Mabuchi solitons
(for a possibly irregular Reeb field)
on a Sasaki manifold admitting a regular Reeb field.
We shall see
that a Sasaki-extremal metric and a Sasaki-Mabuchi soliton give rise,
via the regular quotient, to a $(v,w)$-CSCK metric and a $g(v,w)$-soliton
respectively on the
underlying Fano manifold with the same weights $(v,w)$.
More generally our argument also applies to a transverse $g$-extremal metric
and a transverse $g$-soliton. 

We refer to \cite{BGBook, MSY08} for basics of Sasaki manifolds.
We also refer to \cite{FOW09, CL21} for differential geometry in Sasaki manifolds
including transverse K\"ahler geometry.

\subsection{Basic setup}
Let $(N,g)$ be a compact Sasaki manifold of real dimension $2n+1$ for $n\geq 1$ i.e.
the cone $(C(N), \bar{g}):=(N\times\mathbb{R}_{+}, r^{2}g+dr^{2})$ is a K\"ahler manifold
of complex dimension $n+1$.
The manifold $N$ is often identified with the submanifold $\{r=1\}\subset C(N)$.
A Sasaki manifold $(N,g)$ admits a contact structure $(\hat{\xi}, \eta,\Phi)$
and also admits a one dimensional foliation $\mathcal{F}_{\hat{\xi}}$.
Here the killing vector field $\hat{\xi}$ on $(N,g)$ is called the Reeb vector field,
$\eta$ is called the contact $1$-form $N$,
$\Phi$ is a $(1,1)$ tensor on $N$ which defines a complex structure on the contact sub-bundle
$D:=\Ker\eta$.
A Sasaki manifold is also denoted by $(N,\hat{\xi},\eta,D,\Phi)$.
The data $(\hat{\xi},\eta,D,\Phi)$ is called a Sasaki structure on $N$.
A Sasaki manifold $(N,\hat{\xi},\eta,D,\Phi)$ is said to be
(i) regular if $\mathcal{F}_{\hat{\xi}}$ is obtained by a free $S^{1}$-action,
(ii) quasi-regular if all the leaves of
$\mathcal{F}_{\hat{\xi}}$ is compact
and (iii) irregular if it is not quasi-regular.

The Sasaki structure $(\hat{\xi},\eta,D,\Phi)$ on $N$
yields a transverse holomorphic structure
and a transverse K\"ahler structure on $\mathcal{F}_{\hat{\xi}}$.
One has the splitting of
complex valued basic $p$-forms and the derivative:
\begin{equation}
\Lambda_{B}(N)^{p}=\oplus_{i+j=p}\Lambda_{B}^{i,j}(N),
\quad d_{B}:=d|_{\Lambda_{B}^{p}(N)}=\partial_{B}+\dbar_{B}.
\end{equation}
Put $d_{B}^{c}=\frac{\sqrt{-1}}{2}(\dbar_{B}-\partial_{B})$ so that
$d_{B}d_{B}^{c}=\deldelb$.
If we emphasize the Reeb field $\hat{\xi}$, we write
$d_{\hat{\xi}}, \partial_{\hat{\xi}}, \dbar_{\hat{\xi}}$ and $d^{c}_{\hat{\xi}}$.
Let $\omega$ 
be a basic K\"ahler form on $N$.
The transverse Ricci form $\Ric^{T}$ for $\omega$ is defined as
$\Ric^{T}=-\deldelb\log\det\omega^{n}$.
The basic cohomology class represented by $\Ric^{T}/2\pi$ is independent of the choice
of $\omega$.
This basic cohomology class $[\Ric^{T}/2\pi]_{B,\hat{\xi}}$ is called the basic first Chern class
and is denoted by $c^{B,\hat{\xi}}_{1}(N)$.
\subsection{Transverse $g$-solitons and $g$-extremal metrics}
Let $\Ric^{T}$ is the transverse Ricci form for
$\omega^{T}:=d\eta$.
The transverse scalar curvature $S^{T}$ for $\omega^{T}$ is defined by
$S^{T}(\omega^{T})^{n}\wedge\eta
=n\Ric^{T}\wedge(\omega^{T})^{n-1}\wedge\eta$.
Let $V:=\int_{N}(\omega^{T})^{n}\wedge\eta$ be the volume of $N$
which is independent of the choice of the transverse K\"ahler metric in
$[d\eta]_{B,\hat{\xi}}$.
The average
$\underline{S}^{T}=\int_{N}S^{T}(\omega^{T})^{n}\wedge\eta/V$
of $S^{T}$ is also independent of the choice of $\eta$.

Let $\hat{T}$ be a real maximal compact torus acting effectively as CR-automorphisms of
$(D=\ker\eta, \Phi|_{D})$ such that
$\hat{\xi}\in\hat{\frak{t}}:=\mathrm{Lie}(\hat{T})$.
One has the contact moment map
$\mu^{\hat{\xi}}:C(N)\to\hat{\mathfrak{t}}^{*}$ defined by
\begin{equation}
\langle\mu^{\hat{\xi}},u\rangle=r^{2}\eta(u).
\end{equation}
Let $\mathcal{C}^{*}\subset\hat{\frak{t}^{*}}$ be
the image of $\mu^{\hat{\xi}}$
which is a convex rational polyhedral cone.
The image of the restriction $\mu^{\hat{\xi}}|_{N}$ is given by
\begin{equation}
P_{\hat{\xi}}:=\Set{x\in\mathcal{C}^{*} | \langle x,\hat{\xi}\rangle=1}
\end{equation}
and is called the characteristic hyperplane.
Let $\mathcal{C}:=\Set{y\in\hat{\frak{t}} | \langle x,y\rangle >0 \text{ for any } x\in\mathcal{C}^{*}}$
be the dual cone of $\mathcal{C}^{*}$.
An element $\hat{\chi}\in\hat{\frak{t}}$ lies in $\mathcal{C}$
if and only if $\eta(\hat{\chi})>0$.
Thus $\hat{\chi}\in\mathcal{C}$ defines a Reeb filed on $N$  \cite{HS16}.
The cone $\mathcal{C}$ is called the Reeb cone or the Sasaki cone.
For any Reeb field $\hat{\chi}\in\mathcal{C}$,
there exists a natural projection
\begin{equation}\label{projection}
P_{\hat{\xi}}\ni x\mapsto \frac{x}{\langle x,\hat{\chi}\rangle} \in P_{\hat{\chi}}.
\end{equation}

In order to define the notions of a transverse $g$-extremal metric
and a transverse $g$-soliton,
let $g$ be a smooth positive function on $P_{\hat{\xi}}$
satisfying $\int_{N}g(\mu^{\hat{\xi}})(\omega^{T})^{n}\wedge\eta=V$.
The positivity of the weight function 
$g$ is not necessarily required to define transverse
$g$-extremal metrics, whereas it is essential for defining transverse 
$g$-solitons.
\begin{definition}
A transverse K\"ahler metric $\omega^{T}$ is called a transverse
$g$-extremal metric if
\begin{equation}
S^{T}-\underline{S}^{T}=1-g(\mu^{\hat{\xi}}|_{N}).
\end{equation}
\end{definition}

If $g$ is an affine function, the transverse $g$-extremal metric
is an extremal Sasaki metric in the sense of
Boyer-Galicki-Simanca \cite{BGS08}.

Let us now assume transversely Fano condition
$2\pi c^{B,\hat{\xi}}_{1}(N)=[d\eta]_{B,\hat{\xi}}$.
In this case $\underline{S}^{T}=n$.
The condition $2\pi c^{B,\hat{\xi}}_{1}(N)=[d\eta]_{B,\hat{\xi}}$ follows from
the topological assumptions
$c^{B,\hat{\xi}}_{1}(N)>0$ and $c_{1}(D)=0$
and so-called the $D$-homothetic transformation for the radial function on
the Riemannian cone $C(N)$.
In fact $c_{1}(D)=0$ if and only if $2\pi c^{B,\hat{\xi}}_{1}(N)$ is represented by
$\tau d\eta$ for some constant $\tau$ \cite{FOW09}.

\begin{definition}
A transverse K\"ahler metric $\omega^{T}$ is called a transverse
$g$-soliton if
\begin{equation}
\Ric^{T}-\omega^{T}=\deldelb \log g(\mu^{\hat{\xi}}|_{N})
\end{equation}
\end{definition}

If $g=e^{l}$ for an affine function $l$, the transverse $g$-soliton
is a Sasaki Ricci soliton in the sense of Futaki-Ono-Wang \cite{FOW09}.
If $g$ is an affine function, the transverse $g$-soliton
should be called a Sasaki Mabuchi soliton.
\subsection{Regular quotient}
Let $X$ be a Fano manifold, $T$ a maximal compact torus
in the automorphism group $\mathrm{Aut}(X)$
and $\omega_{0}\in 2\pi c_{1}(X)$ a $T$-invariant K\"ahler metric.
One has the associated moment map $m_{0}:X\to\frak{t}^{*}$ with
the moment polytope $P_{X}$.
Let us consider the unitary $S^{1}$-bundle 
$\pi : N_{\omega_{0}}\to X$ contained in the canonical bundle $K_{X}$
with respect to the Hermitian metric on $K_{X}$
whose curvature is $-\omega_{0}$.
The new torus $\hat{T}:=T\times S^{1}$ acts on $N_{\omega_{0}}$,
where $S^{1}$ acts on the fibers,
preserving the induced CR structure $(D_{0}, J_{0})$.
Here $D_{0}$ is the horizontal distribution
and $J_{0}$ is induced complex structure on $D_{0}$.
In fact $\hat{T}$ is a maximal compact torus acting effectively
as CR automorphisms.
Let $\hat{\chi}\in\hat{\frak{t}}:=\mathrm{Lie}(\hat{T})$
be the generator of the $S^{1}$-action.
The corresponding $1$-form $\eta_{0}^{\hat{\chi}}$ is uniquely determined
as the connection $1$-form on $N_{\omega_{0}}$ with the curvature form
$d\eta_{0}^{\hat{\chi}}=\pi^{*}\omega_{0}$.
The kernel of $\eta^{\hat{\chi}}$ is nothing but the horizontal distribution $D_{0}$.
Thus the transverse K\"ahler structure for the Sasaki structure
$(\hat{\chi}, \eta_{0}^{\hat{\chi}}, D_{0},J_{0})$ on $N_{\omega_{0}}$
is the pull back of $\omega_{0}$ to $(D_{0},J_{0})$.
In particular the transverse Fano condition
$2\pi c^{B,\hat{\chi}}_{1}(N_{\omega_{0}})
=[d\eta_{0}^{\hat{\chi}}]_{B,\hat{\chi}}$ holds.

Let $\xi\in\frak{t}$ to define the affine linear function
$l_{\xi}(x)=\langle x,\xi \rangle +1$ on $P_{X}$.
Let us consider
\begin{equation}
\hat{\xi}:=\xi+\hat{\chi}\in\hat{\frak{t}}.
\end{equation}
An element $\xi\in\frak{t}\subset\hat{\frak{t}}$ gives rise to a vector field on $N_{\omega_{0}}$
given by the horizontal lift of $\xi$ to $D_{0}$ plus
$\langle \pi^{*}m_{0},\xi\rangle\hat{\chi}$.
It follows that $\eta_{0}^{\hat{\chi}}(\hat{\xi})=l_{\xi}(\pi^{*}m_{0})$.
Thus $\hat{\xi}$ defines a Reeb field of $(N_{\omega_{0}},D_{0},J_{0})$
if and only if $l_{\xi}$ is a positive function on $P_{X}$.
Moreover, if $\hat{\xi}$ defines a Reeb field
then the corresponding contact form $\eta_{0}^{\hat{\xi}}$
satisfies the transverse Fano condition
$2\pi c^{B,\hat{\xi}}_{1}(N)=[d\eta_{0}^{\hat{\xi}}]_{B,\hat{\xi}}$
as explained in \cite[Section 4.6]{ALL25} for example.

Assume that $\hat{\xi}=\xi+\hat{\chi}$ defines a Reeb field
of $(N_{\omega_{0}},D_{0},J_{0})$.
Following \cite[Section 2.3]{ACL21}
(building on an idea in \cite[Proof of Lemma 2.2]{HS16}),
we introduce the variation space of Sasaki structures as follows.
\begin{equation}
\Xi_{\hat{\xi},\eta^{\hat{\xi}}_{0},J^{\hat{\xi}}}^{\hat{T}}(N_{\omega_{0}})
:=
\left\{
\varphi\in C^{\infty}(N_{\omega_{0}};\mathbb{R})^{\hat{T}}
\ \middle|\
\eta^{\hat{\xi}}_{\varphi}
:=
\eta^{\hat{\xi}}_{0}+d^{c}_{\hat{\xi}}\varphi
\ \text{satisfies } 
d\eta^{\hat{\xi}}_{\varphi}>0
\ \text{on }D_{\varphi}:=\ker(\eta^{\hat{\xi}}_{\varphi})
\right\},
\tag{KRS11}
\end{equation}
where $J^{\hat{\xi}}\in\mathrm{End}(TN_{\omega_{0}})$ extends
$J_{0}\in\mathrm{End}(D_{0})$ by assuming the conditions
$J^{\hat{\xi}}(\hat{\xi})=0$,
$
d^{c}_{\hat{\xi}}\varphi := -\,d\varphi\circ J^{\hat{\xi}}
$
and that the positivity of $d\eta^{\hat{\xi}}_{\varphi}$ on $D_{\varphi}$
is defined with respect to $J^{\hat{\xi}}$.
We call an element in
$\Xi_{\hat{\xi},\eta^{\hat{\xi}}_{0},J^{\hat{\xi}}}^{\hat{T}}(N_{\omega_{0}})$
a $\hat{\xi}$-transverse K\"ahler potential.
As explained in \cite[Section 2.3]{ACL21},
there is a bijection $\Theta_{\hat{\chi},\hat{\xi}}:
\Xi_{\hat{\xi},\eta^{\hat{\xi}}_{0},J^{\hat{\xi}}}^{\hat{T}}(N_{\omega_{0}})
\to
\Xi_{\hat{\chi},\eta^{\hat{\chi}}_{0},J^{\hat{\chi}}}^{\hat{T}}(N_{\omega_{0}})$,
and $\phi$ and $\Theta_{\hat{\chi},\hat{\xi}}(\phi)$ induce Sasaki structures
with the same underlying CR structure.
Note that 
$\Xi_{\hat{\chi},\eta^{\hat{\chi}}_{0},J^{\hat{\chi}}}^{\hat{T}}(N_{\omega_{0}})$
is identified with $\mathcal{H}^{T}$ via the pullback by $\pi:N_{\omega_{0}}\to X$. 

Now let's see
how a $\hat{\xi}$-transverse $g$-soliton and a $\hat{\xi}$-transverse
$g$-extremal metric are translated into
K\"ahler structures associated with $\hat{\chi}$, namely, 
into the K\"ahler structure of $X$.
For a $\hat{\xi}$-transverse K\"ahler potential $\hat{\phi}$,
one obtains the unique K\"ahler potential $\phi\in\mathcal{H}^{T}$ on $X$
satisfying $\Theta_{\hat{\chi},\hat{\xi}}(\hat{\phi})=\pi^{*}\phi$.
It follows from the formula 
\begin{equation}\label{eta relation}
\eta^{\hat{\xi}}_{\hat{\phi}}
=(\eta^{\hat{\chi}}_{\pi^{*}\phi}(\hat{\xi}))^{-1}\eta^{\hat{\chi}}_{\pi^{*}\phi}
\end{equation}
that the contact moment map $\mu^{\hat{\xi}}_{\hat{\phi}}$
defined by $\eta^{\hat{\xi}}_{\hat{\phi}}$,
the contact moment map $\mu^{\hat{\chi}}_{\pi^{*\phi}}$
defined by $\eta^{\hat{\chi}}_{\pi^{*}\phi}$
and the moment map $m_{\phi}$ defined by $\phi\in\mathcal{H}^{T}$
satisfy the relation
\begin{equation}
\mu^{\hat{\xi}}_{\hat{\phi}}
=\frac{\mu^{\hat{\chi}}_{\pi^{*\phi}}}{\eta^{\hat{\chi}}_{\pi^{*}\phi}(\hat{\xi})}
=\frac{\pi^{*}m_{\phi}+\hat{\chi}^{*}}{l_{\xi}(\pi^{*}m_{\phi})},
\end{equation}
where $\hat{\chi}^{*}\in\hat{\frak{t}}^{*}$ stands as the dual element of
$\hat{\chi}\in\hat{\frak{t}}$.
See \cite[Section 2.4]{CL21} for example.
Thus one obtains a smooth positive function $g_{0}$ on $P_{X}$ satisfying
\begin{equation}
g_{0}(\pi^{*}m_{\phi})=g(\mu^{\hat{\xi}}_{\hat{\phi}})
\end{equation}
on $N_{\omega_{0}}$,
where $g$ is the weight function on $P_{\hat{\xi}}$
in the defining equation of a transverse $g$-extremal metric
and a transverse $g$-soliton. 
This terminology is used in the following propositions,
which show how a $\hat{\xi}$-transverse $g$-soliton and a $\hat{\xi}$-
transverse $g$-extremal metric on $N_{\omega_{0}}$
are translated into a weighted CSCK metric and a soliton-type metric
on $X$ respectively.

\begin{proposition}\label{reg-sigma}
A $\hat{\xi}$-transverse K\"ahler potential
$\hat{\phi}\in
\Xi_{\hat{\xi},\eta^{\hat{\xi}}_{0},J^{\hat{\xi}}}^{\hat{T}}(N_{\omega_{0}})$
defines a $\hat{\xi}$-transverse $g$-soliton
$\omega_{\hat{\xi}}^{T}:=d\eta^{\hat{\xi}}_{\hat{\phi}}$
in $2\pi c^{B,\hat{\xi}}_{1}(N_{\omega_{0}})$
if and only if
the K\"ahler potential $\phi\in\mathcal{H}^{T}$
given by $\Theta_{\hat{\chi},\hat{\xi}}(\hat{\phi})=\pi^{*}\phi$
defines
a $g_{\xi}$-soliton $\omega_{\phi}$ with the weight
\begin{equation}
g_{\xi}=l_{\xi}^{-n-2}g_{0}.
\end{equation}
\end{proposition}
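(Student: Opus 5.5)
The approach is to compare the two transverse Kähler structures induced by $\hat{\phi}$ and $\pi^{*}\phi$, which share the same CR structure, and to translate the transverse soliton equation for $\hat{\xi}$ into an equation on $X$. Write $f:=\eta^{\hat{\chi}}_{\pi^{*}\phi}(\hat{\xi})=l_{\xi}(\pi^{*}m_{\phi})>0$. By \eqref{eta relation} we have $\eta^{\hat{\xi}}_{\hat{\phi}}=f^{-1}\eta^{\hat{\chi}}_{\pi^{*}\phi}$. Since both Reeb fields lie in $\hat{\mathfrak t}$ and $\hat{T}$ preserves everything, $f$ is $\hat{T}$-invariant, hence basic for both foliations. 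Restricted to the common contact distribution $D_{\phi}$ with complex structure $J$,
\[
\omega^{T}_{\hat{\xi}}=d\eta^{\hat{\xi}}_{\hat{\phi}}\big|_{D}=f^{-1}\,\pi^{*}\omega_{\phi}\big|_{D}.
\]
So the transverse Kähler form for $\hat{\xi}$ is conformal, via $f^{-1}$, to the one for $\hat{\chi}$ on $D$.

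First, I would compute the transverse Ricci form. Transverse volume forms on $D$ satisfy $(\omega^{T}_{\hat{\xi}})^{n}=f^{-n}\pi^{*}\omega_{\phi}^{n}$. The transverse Ricci form is $-\deldelb\log$ of a transverse volume density, with $\deldelb$ computed for $\hat{\xi}$, i.e. via $d$ and $J^{\hat{\xi}}$ on basic functions. The key fact is that for a $\hat{T}$-invariant function $h$ (basic for both Reeb fields), $d d^{c}_{\hat{\xi}}h$ and $d d^{c}_{\hat{\chi}}h$ agree when restricted to $D$. This holds because $J^{\hat{\xi}}$ and $J^{\hat{\chi}}$ agree on $D$ and $dh$ kills both Reeb fields; I would check this using $d^{c}h=-dh\circ J$ and evaluating on $D$, noting that the difference lies in the $\eta$-directions. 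However, the Ricci form computed from the volume density also depends on how the transverse holomorphic volume form of $D$ is trivialized. I expect
\[
\Ric^{T}_{\hat{\xi}}=\pi^{*}\Ric(\omega_{\phi})+(n+1)\,\deldel\log f \quad\text{on } D,
\]
and this is the main obstacle. Besides the naive $+n\,\deldel\log f$ from the volume form, an extra $\deldel\log f$ should come from the canonical bundle structure of the cone $C(N)$: the holomorphic volume form of the cone has weight $n+1$ under the Reeb rescaling. The cleanest route I would try is to use the cone $K_X^{\times}\supset N$, where the radial function changes by $r_{\hat{\xi}}^{2}=r_{\hat{\chi}}^{2}/f$. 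I would then express transverse Ricci through the Ricci-flat condition of the holomorphic $(n+1)$-form on $K_X^{\times}$, as in \cite{ACL21, AJL23}. That accounts for the exponent $n+2$ rather than $n$.

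Next, the soliton equation $\Ric^{T}_{\hat{\xi}}-\omega^{T}_{\hat{\xi}}=\deldelb\log g(\mu^{\hat{\xi}})$ involves $\omega^{T}_{\hat{\xi}}=f^{-1}\pi^{*}\omega_{\phi}$, which is not of the form $\pi^{*}\omega_{\phi}+\deldel(\cdot)$. I would handle this using $\omega_{\phi}$ together with the moment map identity $\deldel l_{\xi}(m_{\phi})=L_{\xi}\omega_{\phi}$ from Lemma \ref{Deltal}. More precisely, I would write $d(f^{-1}\eta^{\hat{\chi}})=f^{-1}d\eta^{\hat{\chi}}-f^{-2}df\wedge\eta^{\hat{\chi}}$ and restrict to $D$. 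Since the transverse $\hat{\xi}$-Fano class equals $[d\eta^{\hat{\xi}}]$, the combination $\Ric^{T}_{\hat{\xi}}-\omega^{T}_{\hat{\xi}}$ is $\deldelb$-exact. Matching against $\pi^{*}(\Ric(\omega_\phi)-\omega_\phi)=\pi^{*}\deldel\rho_\phi$ gives
\[
\Ric^{T}_{\hat{\xi}}-\omega^{T}_{\hat{\xi}}=\pi^{*}\deldel\bigl(\rho_{\phi}+(n+2)\log l_{\xi}(m_{\phi})\bigr)
\]
on $D$, where the extra $+1$ comes from the $-\omega^{T}_{\hat{\xi}}+\pi^{*}\omega_\phi$ term. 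I would verify this exponent bookkeeping against the known case $g\equiv 1$ (Sasaki–Einstein $\leftrightarrow$ $l^{-n-2}$-soliton, listed in Section \ref{weighted section}), which fixes the constants.

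Finally, substituting $g(\mu^{\hat{\xi}}_{\hat{\phi}})=\pi^{*}g_{0}(m_{\phi})$, the $\hat{\xi}$-soliton equation becomes $\pi^{*}\deldel\bigl(\rho_\phi+(n+2)\log l_\xi(m_\phi)-\log g_0(m_\phi)\bigr)=0$. Because $\pi^{*}$ is injective on $T$-invariant forms on $X$, and because of compactness and the maximum principle, this is equivalent to
\[
\rho_\phi-\log\bigl(l_\xi^{-n-2}g_0\bigr)(m_\phi)=\mathrm{const}.
\]
Using $\rho_{g,\phi}=\rho_\phi-\log g(m_\phi)$, this says $\rho_{g_\xi,\phi}$ is constant, hence zero by the normalization \eqref{Ricg}. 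That is exactly the $g_{\xi}$-soliton condition, and every step is reversible, which gives both directions.
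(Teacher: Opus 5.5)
\textbf{There is a genuine gap.} Your last step is fine. The problem is the central computation, which the paper imports from \cite[Lemma 4.3]{ALL25}: you leave it as an expectation, and the mechanism you propose for it rests on a false identity.

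Write $\hat{\xi}=f\hat{\chi}+X_{D}$ with $X_{D}\in D$, so that $\pi_{*}X_{D}=\xi$. We have $J^{\hat{\xi}}=J^{\hat{\chi}}$ on $D$, but $J^{\hat{\xi}}\hat{\xi}=0$ while $J^{\hat{\chi}}\hat{\xi}=JX_{D}$. Hence for $\hat{T}$-invariant $h$
\[
d^{c}_{\hat{\xi}}h-d^{c}_{\hat{\chi}}h=dh(JX_{D})\,\eta^{\hat{\xi}},
\qquad
\bigl(dd^{c}_{\hat{\xi}}h-dd^{c}_{\hat{\chi}}h\bigr)\big|_{D}=f^{-1}dh(JX_{D})\,\pi^{*}\omega_{\phi}\big|_{D}.
\]
For $h=\pi^{*}\langle m_{\phi},\xi\rangle$ the coefficient $dh(JX_{D})$ equals, up to sign, $|\xi|^{2}_{\omega_{\phi}}\neq 0$. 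So your ``key fact'' is false, and pointwise matching of $2$-forms on $D$ against a single $\pi^{*}\deldel$ cannot work as you set it up.

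Your exponent bookkeeping is also wrong. On $D$, $\pi^{*}\omega_{\phi}-\omega^{T}_{\hat{\xi}}=(1-f^{-1})\pi^{*}\omega_{\phi}$ is a function times $\omega_{\phi}$, not $\deldel\log f$, so the ``extra $+1$'' cannot come from there. Your formula for $\Ric^{T}_{\hat{\xi}}$ is only conjectured, and calibrating constants against the case $g\equiv 1$ is not a proof.

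The missing idea is to compare Ricci \emph{potentials} as functions rather than forms on $D$.
\begin{itemize}
\item Let $\Omega$ be the canonical holomorphic volume form on $K_{X}^{\times}$ (the differential of the tautological $n$-form). For $\zeta\in\{\hat{\xi},\hat{\chi}\}$, set $F_{\zeta}:=(d\eta^{\zeta})^{n}/(i_{\zeta}\Omega\wedge\overline{i_{\zeta}\Omega})$ on $D$.
\item The transverse Fano condition for each Reeb field, i.e.\ $\Omega$ having the same charge (here $1$) under both, gives $\Ric^{T}_{\zeta}-d\eta^{\zeta}=-dd^{c}_{\zeta}\log F_{\zeta}$. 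In particular $\log F_{\hat{\chi}}=-\pi^{*}\rho_{\phi}+c$.
\item Now $(d\eta^{\hat{\xi}})^{n}|_{D}=f^{-n}(d\eta^{\hat{\chi}})^{n}|_{D}$. Since $\Omega|_{D}=0$ ($D$ has complex dimension $n$), also $i_{\hat{\xi}}\Omega|_{D}=f\,i_{\hat{\chi}}\Omega|_{D}$. Hence $F_{\hat{\xi}}=f^{-n-2}F_{\hat{\chi}}$: the exponent is $n$ from the volume plus $2$ from $|i_{\hat{\xi}}\Omega|^{2}$.
\item The $\hat{\xi}$-soliton equation then becomes $dd^{c}_{\hat{\xi}}H=0$ for the $\hat{T}$-invariant function $H=\pi^{*}\bigl(\rho_{\phi}+(n+2)\log l_{\xi}(m_{\phi})-\log g_{0}(m_{\phi})\bigr)$.
\item By the maximum principle this holds iff $H$ is constant, iff $dd^{c}_{\hat{\chi}}H=0$. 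The latter is exactly the equation displayed in the paper's proof.
\end{itemize}
Working with $H$ means the discrepancy between $dd^{c}_{\hat{\xi}}$ and $dd^{c}_{\hat{\chi}}$ never has to be computed.
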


\begin{proof}
We follow an argument in \cite{ALL25} (see also \cite{AJL23, CL21}).
It follows from \cite[Lemma 4.3]{ALL25} and the formula
\eqref{eta relation} that
$\omega_{\hat{\xi}}^{T}$ is a $\hat{\xi}$-transverse $g$-soliton
if and only if the $\hat{\chi}$-transverse K\"ahler metric
$\pi^{*}\omega_{\phi}=d\eta_{\pi^{*}\phi}^{\hat{\chi}}$ satisfies
\begin{equation}
\Ric^{T}(\pi^{*}\omega_{\phi})-\pi^{*}\omega_{\phi}=
dd^{c}_{\hat{\chi}}g_{0}(\pi^{*}m_{\phi})
-(n+2)dd^{c}_{\hat{\chi}}\log l_{\xi}(\pi^{*}m_{\phi}).
\end{equation}
Thus the K\"ahler metric $\omega_{\phi}$ is a $g_{\xi}$-soliton on $X$.
\end{proof}

\begin{proposition}\label{reg-ext}
A $\hat{\xi}$-transverse K\"ahler potential
$\hat{\phi}\in
\Xi_{\hat{\xi},\eta^{\hat{\xi}}_{0},J^{\hat{\xi}}}^{\hat{T}}(N_{\omega_{0}})$
defines a $\hat{\xi}$-transverse $g$-extremal metric
$\omega_{\hat{\xi}}^{T}:=d\eta^{\hat{\xi}}_{\hat{\phi}}$
in $2\pi c^{B,\hat{\xi}}_{1}(N_{\omega_{0}})$
if and only if
the K\"ahler potential $\phi\in\mathcal{H}^{T}$
given by $\Theta_{\hat{\chi},\hat{\xi}}(\hat{\phi})=\pi^{*}\phi$
defines
a $(v_{\xi},w_{\xi})$-CSCK metric with the weights
\begin{equation}
v_{\xi}=l_{\xi}^{-n-1}
\quad\text{and}\quad
w_{\xi}=\left(n+1-g_{0}\right)l_{\xi}^{-1}.
\end{equation}
\end{proposition}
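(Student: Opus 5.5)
The plan is to reduce the statement to a pointwise identity relating the $\hat{\xi}$-transverse scalar curvature of $d\eta^{\hat{\xi}}_{\hat{\phi}}$ to the $v_{\xi}$-weighted scalar curvature of $\omega_{\phi}$ on $X$. This is the scalar analogue of the Ricci-form identity from \cite[Lemma 4.3]{ALL25} used in the proof of Proposition \ref{reg-sigma}. Concretely, I would aim to show that, with $v_{\xi}=l_{\xi}^{-n-1}$ and the convention of Remark \ref{w-scalar},
\begin{equation}
S^{T}\big(d\eta^{\hat{\xi}}_{\hat{\phi}}\big)=\pi^{*}\Big(l_{\xi}(m_{\phi})\,S_{v_{\xi}}(\phi)\Big)
\end{equation}
on $N_{\omega_{0}}$. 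This is the formula of Apostolov--Calderbank--Legendre \cite{ACL21} (see also \cite{AJL23, CL21}), rewritten in the Boucksom--Jonsson--Trusiani normalization.

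Granting this identity, the proposition is a direct rewriting. By transverse Fano-ness, $\underline{S}^{T}=n$. Using $g(\mu^{\hat{\xi}}_{\hat{\phi}})=g_{0}(\pi^{*}m_{\phi})$, the transverse $g$-extremal equation $S^{T}-n=1-g(\mu^{\hat{\xi}}|_{N})$ becomes
\begin{equation}
l_{\xi}(m_{\phi})\,S_{v_{\xi}}(\phi)=n+1-g_{0}(m_{\phi}).
\end{equation}
Pulling back by $\pi$ is injective on $T$-invariant functions. Dividing by the positive function $l_{\xi}$ then gives $S_{v_{\xi}}(\phi)=w_{\xi}(m_{\phi})$, and every step reverses. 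I would also check that the normalization $\int_{X}S_{v_{\xi}}\MA_{v_{\xi}}=\int_{X}w_{\xi}\MA_{v_{\xi}}$ holds. This follows from the normalization $\int_{N}g(\mu^{\hat{\xi}})(\omega^{T})^{n}\wedge\eta=V$, since $(\omega^{T}_{\hat{\xi}})^{n}\wedge\eta^{\hat{\xi}}$ pushes forward to a multiple of $l_{\xi}^{-n-1}(m_{\phi})\omega_{\phi}^{n}$ by \eqref{eta relation}.

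The main work is proving the displayed scalar identity, and I would do it in three steps. First, from \eqref{eta relation} write $\eta^{\hat{\xi}}=l^{-1}\eta^{\hat{\chi}}$ with $l=l_{\xi}(\pi^{*}m_{\phi})$. Then
\begin{equation}
d\eta^{\hat{\xi}}=l^{-1}\pi^{*}\omega_{\phi}-l^{-2}dl\wedge\eta^{\hat{\chi}},
\end{equation}
so that on $D_{\phi}$ the transverse metric is $l^{-1}\pi^{*}\omega_{\phi}$, up to a change of horizontal lift. Second, from this the transverse volume forms satisfy $(\omega^{T}_{\hat{\xi}})^{n}\wedge\eta^{\hat{\xi}}=l^{-n-1}(\pi^{*}\omega_{\phi})^{n}\wedge\eta^{\hat{\chi}}$. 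As in \cite[Lemma 4.3]{ALL25}, this gives
\begin{equation}
\Ric^{T}_{\hat{\xi}}=\pi^{*}\Big(\Ric(\omega_{\phi})-\deldel\log v_{\xi}(m_{\phi})\Big)=\pi^{*}\Ric_{v_{\xi}}(\omega_{\phi}),
\end{equation}
read as basic forms with respect to the respective foliations. Third, take the $\hat{\xi}$-transverse trace of this form. The inverse-metric factor $l$ produces $l\,\tr_{\phi}$. The correction term $-l^{-2}dl\wedge\eta^{\hat{\chi}}$, contracted against the $\hat{\chi}$-component of the basic form, produces exactly the moment-map term. Indeed, the $\hat{\chi}$-component of $\Ric_{v_{\xi}}$ is governed by its moment map $m_{\Ric_{v}}$, and $dl/l=\langle(\log l_{\xi})',dm\rangle$ with $(\log v_{\xi})'=-(n+1)(\log l_{\xi})'$. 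This should yield $l\,\tr_{v_{\xi},\phi}\Ric_{v_{\xi}}=l\,S_{v_{\xi}}$, possibly up to a term linear in $m_{\phi}$ coming from $\hat{\chi}^{*}$.

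I expect the main obstacle to be this third step: keeping track of the constant and moment-map shifts produced by $\hat{\chi}^{*}$ in $\mu^{\hat{\xi}}$. A constant discrepancy would be absorbed by the shift $n$ versus $n+1$ that appears in $w_{\xi}$. If the direct computation becomes messy, I would instead cite the formula $\mathrm{Scal}_{\hat{\xi}}=l_{\xi}\,\mathrm{Scal}_{l_{\xi}^{-n-1}}(\omega_{\phi})$ from \cite{ACL21, AJL23} and only verify the translation to the convention of Remark \ref{w-scalar}.
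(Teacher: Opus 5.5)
Your proposal is essentially the paper's proof: the paper likewise reduces everything to the identity $S^{T}(\hat\phi)=l_{\xi}(\pi^{*}m_{\phi})\,\pi^{*}S_{v_{\xi}}(\phi)$, quoting it from \cite[Lemma 3]{AC21} (your fallback) rather than deriving it, then rewrites $S^{T}=n+1-g$ using $\underline{S}^{T}=n$ and divides by $l_{\xi}>0$. Two side remarks are wrong but not load-bearing: the identity must hold exactly, since the $n+1$ in $w_{\xi}$ is $\underline{S}^{T}+1$ and cannot ``absorb'' a discrepancy; and the normalization $\int_{X}S_{v_{\xi}}\MA_{v_{\xi}}=\int_{X}w_{\xi}\MA_{v_{\xi}}$ is equivalent to $\int_{N}(S^{T}-n-1+g)\,l_{\xi}^{-1}(\omega^{T})^{n}\wedge\eta=0$, which is orthogonality to the affine function $\langle\cdot,\hat\chi\rangle=l_{\xi}^{-1}$ rather than to constants, so it does not follow from $\int_{N}g\,(\omega^{T})^{n}\wedge\eta=V$ --- it is simply automatic once either pointwise equation has a solution.
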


\begin{proof}
We apply the result of \cite[Lemma 3]{AC21} proving that
\begin{equation}
S^{T}(\hat{\phi})\hat{\xi}=
l_{\xi}(\pi^{*}m_{\phi})^{2}(\pi^{*}S_{v_{\xi}}(\phi))\hat{\chi},
\end{equation}
where $S^{T}(\hat{\phi})$ is the transverse scalar curvature of
$\omega^{T}_{\hat{\xi}}$
and $S_{v_{\xi}}$ is the $v_{\xi}$-weighted scalar curvature of
$\omega_{\phi}$.
Recall Remark \ref{w-scalar} for the notation.
It follows from the formula \eqref{eta relation}
that $S^{T}(\hat{\phi})=l_{\xi}(\pi^{*}m_{\phi})\pi^{*}S_{v_{\xi}}(\phi)$.
Thus the defining equation of a $\hat{\xi}$-transverse $g$-extremal metric
is equivalent to
\begin{equation}
\pi^{*}S_{v_{\xi}}(\phi)=w_{\xi}(\pi^{*}m_{\phi}),
\end{equation}
which shows that $\omega_{\phi}$ is a $(v_{\xi},w_{\xi})$-CSCK metric
in $2\pi c_{1}(X)$.
\end{proof}

Summarizing the above arguments,
we have the following theorem, showing that
a $g(v,w)$-soliton appears naturally in Sasaki geometry of $N_{\omega_{0}}$.

\begin{theorem}\label{Sasaki g(v,w)}
The weight $g(v_{\xi},w_{\xi})$ associated with the weight
$(v_{\xi},w_{\xi})$ in Proposition \ref{reg-ext} equals
the wight $g_{\xi}$ in Proposition
\ref{reg-sigma},
that is,
for any $x\in P_{X}$,
\begin{equation}
g(v_{\xi},w_{\xi})(x)=g_{\xi}(x).
\end{equation}
\end{theorem}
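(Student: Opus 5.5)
The identity in Theorem \ref{Sasaki g(v,w)} is a pointwise algebraic identity on $P_{X}$, so I will prove it by substituting the explicit weights from Proposition \ref{reg-ext} into the definition \eqref{g(v,w)} of $g(v,w)$ and simplifying. No analysis is needed. The only input is the affine function $l_{\xi}(x)=\langle x,\xi\rangle+1$, which is positive on $P_{X}$ because $\hat{\xi}$ is a Reeb field. Positivity makes $\log l_{\xi}$, $l_{\xi}^{-1}$ and $l_{\xi}^{-n-1}$ smooth on $P_{X}$.

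First I compute the logarithmic derivative term. From $v_{\xi}=l_{\xi}^{-n-1}$ we get $\log v_{\xi}=-(n+1)\log l_{\xi}$. Since $l_{\xi}$ is affine with linear part $\xi$, its differential is $l_{\xi}^{\prime}=\xi\in\mathfrak{t}$, so $(\log v_{\xi})^{\prime}(x)=-(n+1)\,\xi/l_{\xi}(x)$. Pairing with $x$ and using $\langle x,\xi\rangle=l_{\xi}(x)-1$ gives
\begin{equation}
\langle(\log v_{\xi})^{\prime}(x),x\rangle=-(n+1)\frac{l_{\xi}(x)-1}{l_{\xi}(x)}=-(n+1)+\frac{n+1}{l_{\xi}(x)}.
\end{equation}

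Next I insert this and $w_{\xi}=(n+1-g_{0})l_{\xi}^{-1}$ into the bracket of \eqref{g(v,w)}. The constants $1+n$ and $-(n+1)$ cancel, and the two $(n+1)/l_{\xi}$ terms cancel, leaving
\begin{equation}
1+n+\langle(\log v_{\xi})^{\prime}(x),x\rangle-w_{\xi}(x)=\frac{g_{0}(x)}{l_{\xi}(x)}.
\end{equation}
Multiplying by $v_{\xi}(x)=l_{\xi}(x)^{-n-1}$ yields $g(v_{\xi},w_{\xi})(x)=l_{\xi}(x)^{-n-2}g_{0}(x)$, which is exactly $g_{\xi}(x)$ from Proposition \ref{reg-sigma}.

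The main point requiring care is that the pairing $\langle\cdot,\cdot\rangle$ and the derivative $(\log v)^{\prime}$ in \eqref{g(v,w)} must be taken in the same conventions as in Propositions \ref{reg-sigma} and \ref{reg-ext}. Concretely, $x$ ranges over the moment polytope $P_{X}\subset\mathfrak{t}^{*}$ with the normalization of $m_{\phi}$ fixed in Section \ref{weighted section}. One must also confirm that this is the same normalization under which $\eta_{0}^{\hat{\chi}}(\hat{\xi})=l_{\xi}(\pi^{*}m_{0})$ holds, so that the constant term of $l_{\xi}$ is exactly $1$.

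Once these conventions are matched, the cancellation above is immediate. Theorem \ref{Sasaki theorem} then follows by taking $g$ affine: Proposition \ref{reg-ext} produces the weights $(v_{\xi},w_{\xi})$ of the extremal metric, and Proposition \ref{reg-sigma} shows that the $g_{\xi}$-soliton, which equals the $g(v_{\xi},w_{\xi})$-soliton, corresponds to the transverse Mabuchi soliton on $N_{\omega_{0}}$.
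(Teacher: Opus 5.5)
Your proposal is correct and follows essentially the same route as the paper. The paper also substitutes $v_{\xi}=l_{\xi}^{-n-1}$ and $w_{\xi}=(n+1-g_{0})l_{\xi}^{-1}$ into the definition of $g(v,w)$, uses $\langle(\log l_{\xi})^{\prime}(x),x\rangle=(l_{\xi}(x)-1)/l_{\xi}(x)$, and gets the same cancellation, leaving $l_{\xi}^{-n-2}g_{0}=g_{\xi}$.
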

\begin{proof}
By definition of $g(v,w)$-weight \eqref{g(v,w)}, one obtains
\begin{eqnarray*}
g(v_{\xi},w_{\xi})(x)
&=&
l_{\xi}^{-n-1}\left( 1+n -(n+1)\langle (\log l_{\xi})^{\prime}(x),x\rangle
-\left(n+1-g_{0}\right)l_{\xi}^{-1}\right) \\
&=&
l_{\xi}^{-n-1}\left( 1+n -(n+1)\frac{l_{\xi}(x)-1}{l_{\xi}}
-\left(n+1-g_{0}\right)l_{\xi}^{-1}\right) \\
&=&g_{\xi}(x).
\end{eqnarray*}
\end{proof}

Theorem \ref{Sasaki theorem} stated in the introduction corresponds to Theorem
\ref{Sasaki g(v,w)}, where the weight function $g$ defining a $\hat{\xi}$-transverse
$g$-extremal metric and a $\hat{\xi}$-transverse $g$-soliton
is assumed to be affine.

\end{document}